\DeclareRobustCommand*{\bfseries}{%
  \not@math@alphabet\bfseries\mathbf
  \fontseries\bfdefault\selectfont
  \boldmath
}
\theoremstyle{plain}
\newtheorem{theorem}{Theorem}[section]
\newtheorem{proposition}[theorem]{Proposition}
\newtheorem{lemma}[theorem]{Lemma}
\newtheorem{corollary}[theorem]{Corollary}
\newtheorem{definition}[theorem]{Definition}
\newtheorem{remark}[theorem]{Remark}
\newtheorem{example}[theorem]{Example}
\theoremstyle{nonumberplain}
\newtheorem{proof}{Proof}
\numberwithin{equation}{section}
\numberwithin{table}{section}
\newcolumntype{C}{>{$}c<{$}}
\newcolumntype{L}{>{$}l<{$}}
\newcolumntype{R}{>{$}r<{$}}
\let\oldbibliography\thebibliography
\renewcommand{\thebibliography}[1]{%
  \oldbibliography{#1}%
  \small
  \setlength{\itemsep}{0pt}%
  \setlength{\parskip}{0pt plus 1pt}%
}
\newcommand{\Lie}[1]{\operatorname{\textsl{#1}}}
\newcommand{\lie}[1]{\operatorname{\mathfrak{#1}}}
\newcommand{\Lel}[1]{{\mathsf{#1}}}  
\newcommand{\GL}{\Lie{GL}}
\newcommand{\SO}{\Lie{SO}}
\newcommand{\Sp}{\Lie{Sp}}
\newcommand{\SU}{\Lie{SU}}
\newcommand{\Un}{\Lie{U}}
\newcommand{\Spin}{\Lie{Spin}}
\newcommand{\g}{\lie{g}}
\newcommand{\h}{\lie{h}}
\newcommand{\lk}{\lie{k}}
\newcommand{\lr}{\lie{r}}
\newcommand{\ld}{\lie{d}}
\newcommand{\sP}{\lie{sp}}
\newcommand{\su}{\lie{su}}
\newcommand{\lt}{\lie{t}}
\newcommand{\un}{\lie{u}}
\newcommand{\Ld}{\mathcal L}
\newcommand{\Pg}[1][\g]{\mathcal P_{#1}}
\newcommand{\dP}{d_{\mathcal P}}
\newcommand{\X}{\mathcal X}
\newcommand{\Xt}[1][t]{\X_{#1}}
\newcommand{\CST}{\mathscr C}
\newcommand{\Hodge}{{*}}
\newcommand{\bC}{{\mathbb C}}
\newcommand{\bH}{{\mathbb H}}
\newcommand{\bR}{{\mathbb R}}
\DeclareMathOperator{\ad}{ad}
\DeclareMathOperator{\Ad}{Ad}
\DeclareMathOperator{\CP}{\bC P}
\DeclareMathOperator{\Der}{Der}
\DeclareMathOperator{\Hom}{Hom}
\DeclareMathOperator{\img}{im}
\DeclareMathOperator{\im}{Im}
\DeclareMathOperator{\re}{Re}
\DeclareMathOperator{\Tr}{Tr}
\DeclareMathOperator{\diag}{diag}
\DeclareMathOperator{\stab}{stab}
\DeclareMathOperator{\vol}{vol}
\newcommand{\hook}{{\lrcorner\,}}
\newcommand{\HKT}{\textsc{HKT}\xspace}
\newcommand{\NB}{\nabla}
\newcommand{\LC}{\NB^{\textup{LC}}}
\DeclarePairedDelimiter{\norm}{\lVert}{\rVert}
\DeclarePairedDelimiter{\Span}{\langle}{\rangle}
\DeclarePairedDelimiter{\inpd}{\langle}{\rangle}
\newcommand{\inp}[2]{\inpd{#1,#2}}
\renewcommand{\bfdefault}{b}
\newcommand{\br}{\hspace{0pt}}
\newcommand{\bdash}{-\br} 
\newcommand{\eqbreak}{\\&\qquad}
\begin{document}

\thispagestyle{empty}
\begin{small}
  \begin{flushright}
    IMADA Preprint 2010\\
    CP\textsuperscript3-ORIGINS: 2010-53
  \end{flushright}
\end{small}

\bigskip

\begin{center}
  \LARGE\bfseries Multi-moment maps
\end{center}
\begin{center}
  \Large Thomas Bruun Madsen and Andrew Swann
\end{center}

\begin{abstract}
  We introduce a notion of moment map adapted to actions of Lie groups
  that preserve a closed three-form.  We show existence of our
  multi-moment maps in many circumstances, including mild topological
  assumptions on the underlying manifold.  Such maps are also shown to
  exist for all groups whose second and third Lie algebra Betti
  numbers are zero.  We show that these form a special class of
  solvable Lie groups and provide a structural characterisation.  We
  provide many examples of multi-moment maps for different geometries
  and use them to describe manifolds with holonomy contained in \( G_2
  \) preserved by a two-torus symmetry in terms of tri-symplectic
  geometry of four-manifolds.
\end{abstract}

\bigskip
\begin{center}
  \begin{minipage}{0.8\linewidth}
    \begin{small}
      \tableofcontents
    \end{small}
  \end{minipage}
\end{center}

\bigskip
\begin{small}\noindent
  2010 Mathematics Subject Classification: Primary 53C15; Secondary
  22E25, 53C29, 53C30, 53C55, 53D20, 70G45.
\end{small}
\newpage

\section{Introduction}
\label{sec:introduction}

One illuminating example of the interplay between mathematics and
physics is the relation between symplectic geometry and mechanics.  A
symplectic manifold is characterised by a closed, non-degenerate form
of degree two.  In modern physics higher degree forms play an
important role too.  While some authors have looked at extensions of
field theories, closed three-forms appear to be particularly relevant
in supersymmetric theories with Wess-Zumino terms, string theory and
one-dimensional quantum mechanics
\cite{Michelson-S:conformal,Strominger:superstrings,Gates-HR:twisted,Baez-HR:string}.
They have been studied mathematically in a number of contexts
including stable forms \cite{Hitchin:forms}, strong geometries with
torsion \cite{Fino-PS:SKT}, gerbes \cite{Brylinski:loop} and
generalised geometry
\cite{Hitchin:generalized-CY,Gualtieri:generalized}.

One construction illustrating the link between symplectic geometry and
physics is that of moment maps.  A moment map is an equivariant map
from a symplectic manifold into the dual of the Lie algebra of a Lie
group acting by symplectomorphisms.  It captures the concepts of
linear and angular momentum from mechanics.  The main purpose of this
paper is to explain that a similar type of map exists when we are
given a manifold \( M \) with a closed three-form \( c \) and a Lie
group \( G \) that acts on \( M \) preserving \( c \).  We shall call
the pair \( (M,c) \) a \emph{strong geometry}, and we refer to the Lie
group \( G \) as a \emph{group of symmetries}.  We write \( \lie g \)
for the Lie algebra of \( G \).

An important feature of our construction is that the resulting
multi-moment map is a map from \( M \) to a vector subspace~\( \Pg^*
\) of \( \Lambda^2\lie g^* \), with \( \Pg^* \) independent of~\( M
\).  This is in contrast to previous considerations
\cite{Carinena-CI:multisymplectic,Gotay-IMM:covariant} of so-called
covariant moment maps \( \sigma \colon M \to \Omega^1(M,\lie g^*) \),
which are defined via the relation
\begin{equation}
  \label{eq:covariant}
  d\inp \sigma {\Lel X} = X \hook c,\qquad\text{for all \(
    \Lel X \in \lie g \)},
\end{equation}
where \( X \) is the vector field on \( M \) generated by \( \Lel X
\in \lie g \).  Here the target space \( \Omega^1(M,\lie g^*) \) is an
infinite-dimensional space depending both on \( M \) and on \( \lie g
\).  We also note that finding covariant moment maps can be hard;
equation~\eqref{eq:covariant} has a solution \( \inp\sigma{\Lel X} \)
only if the cohomology class \( [X\hook c] \) vanishes in \( H^2(M)
\).  Thus, existence of covariant moment maps often requires some
non-trivial topological assumption such as \( b_2(M) = 0 \).

In contrast, we will show that multi-moment maps exist under mild
topological assumptions: if \( M \) is simply-connected and either \(
G \) is compact or \( M \) is compact with \( G \)-invariant volume
form.  This is analogous to symplectic moment maps, and enables us to
give many examples.  As one application, we will use multi-moment maps
to study seven\bdash manifolds with holonomy contained in \( G_2 \),
when these have a free isometric action of a two-torus.  We find that
the geometry is determined by a conformal structure on a four-manifold
specified by a certain triple of symplectic two-forms.  This extends
the work of Apostolov \& Salamon \cite{Apostolov-S:K-G2} and fits with
the perspective of Donaldson \cite{Donaldson:two-forms}.

In the symplectic case, there is also a general existence theorem for
moment maps in the case that the symmetry group is semi-simple; it is
a result that does not require any topological assumptions on the
manifold.  Note that semi-simplicity of a Lie group is characterised
algebraically by the vanishing of the first and second Betti numbers
of the Lie algebra cohomology.  In this direction, we prove that
multi-moment maps exist whenever the second and third Betti numbers \(
b_2(\lie g) \) and \( b_3(\lie g) \) of the Lie algebra cohomology of
\( G \) vanish.  We call Lie algebras of this type \emph{\( (2,3)
  \)-trivial}.  The weaker setting of Lie algebras with \( b_2(\lie g)
= 0 \), where multi-moment maps are unique if defined, provides many
examples of homogeneous strong geometries, including examples that are
2-plectic in the terminology of \cite{Baez-HR:string}.

As far as we know, \( (2,3) \)-trivial algebras have not been studied
before.  We show that these are solvable Lie algebras, that are not
products of smaller dimensional algebras.  Their derived algebra is of
codimension one, and is necessarily nilpotent.  From this one may
classify the low-dimensional examples, and further study leads to a
characterisation of the allowed solvable extensions of nilpotent
algebras.  The structure theory shows that many examples exist,
including some that are unimodular.  On the other hand one finds that
some nilpotent algebras can not be realised as the derived algebra of
a \( (2,3) \)-trivial algebra.

This paper is organised as follows.  In section~\ref{sec:definition}
we give the fundamental calculations that lead to the definition of
multi-moment map and introduce the Lie kernel \( \Pg \) of a Lie
algebra~\( \lie g \).  We then consider topological and algebraic
criteria for existence and uniqueness of multi-moment maps in
section~\ref{sec:exc}.  As discussed above \( (2,3) \)-trivial Lie
algebras play a natural role and section~\ref{sec:solalg} is devoted
to an algebraic study of this class and the description of a number of
examples.  We then return to strong geometries and their multi-moment
maps.  The basic example is provided by the total space \(
\Lambda^2T^*N \) of the second exterior power of the cotangent bundle
of a manifold \( N \).  Homogeneous strong geometries with
multi-moment maps are closely tied to orbits in the dual \( \Pg^* \)
of the Lie kernel and we develop a Kirillov-Kostant-Souriau type
theory, pointing out links with nearly Kähler and hypercomplex
geometry.  The final section of the paper is devoted to an
investigation of torsion-free \( G_2 \)-manifolds with an isometric
action of a two-torus.  We show how multi-moment maps lead to a
description of such metrics via tri-sympletic geometry of
four-manifolds.

Some of the algebraic material of this paper is supplemented by our
work in \cite{Madsen-S:2-3-trivial}.  Future work will address
extensions of the final section providing multi-moment map approaches
to torsion-free \( \Spin(7) \)-structures with \( T^3 \)-symmetry.

\paragraph*{Acknowledgements}
We gratefully acknowledge financial support from \textsc{ctqm},
\textsc{geomaps} and \textsc{opalgtopgeo}.  AFS is also partially
supported by the Ministry of Science and Innovation, Spain, under
Project \textsc{mtm}{\small 2008-01386} and thanks \textsc{nordita}
for hospitality.

\section{Main definitions}
\label{sec:definition}

Let \( (M,c) \) be a strong geometry, meaning that \( M \) is a smooth
manifold and that \( c \) is a closed three-form on \( M \).  Note
that unlike the symplectic case there is no one canonical form for \(
c \), not even pointwise on~\( M \).  In general, we do not require
any non-degeneracy of~\( c \).  However, when necessary we will use
the terminology of \cite{Baez-HR:string} that \( c \) is
\emph{2-plectic} if \( X\hook c=0 \) at \( x\in M \) only when \( X=0
\) in \( T_xM \).

\begin{remark}
  Since \( c \) is closed, \( \ker c = \{\,X\in TM:X\hook c=0\,\} \)
  is integrable.  Thus if \( \ker c \) is of constant rank and has
  closed leaves, \( c \) induces a 2-plectic structure on \( M/\ker c
  \).
\end{remark}

\begin{remark}
  \label{rem:strongndeg}
  One could consider strongly non-degenerate three-forms \( c \),
  meaning that \( c(X,Y,\cdot) \ne 0 \) for all \( X\wedge Y\ne 0 \).
  However, by \cite{Massey:cross-products} such \( c \) exist only in
  dimensions \( 3 \) and~\( 7 \).  The former case is given by a
  volume form, the latter by a \( G \)-structure with \( G = G_2 \) or
  its non-compact dual.
\end{remark}

Let \( G \) be a group of symmetries for \( (M,c) \), meaning that \(
G \) acts on \( M \) preserving the three-form \( c \).  Thus for each
\( \Lel X \in \lie g \) we have \( \Ld_Xc=0 \), where \( X \) is the
vector field generated by \( \Lel X \).  As \( dc=0 \), this gives
\begin{equation}
  \label{eq:X-c}
  0 = \Ld_Xc = d(X\hook c) + X\hook dc = d(X\hook c),
\end{equation}
so the two-form \( X\hook c \) is closed.  Suppose \( \Lel Y\in \lie g
\) commutes with \( \Lel X \).  Then we have
\begin{equation*}
  0 = \Ld_Y(X\hook c) =  d(Y\hook X\hook c) = d((X\wedge Y) \hook c),
\end{equation*}
showing that the one form \( (X\wedge Y)\hook c = c(X,Y,\cdot) \) is
closed.  If for example, \( b_1(M) = 0 \), we may then write
\begin{equation*}
  (X\wedge Y) \hook c = d \nu_{X\wedge Y}
\end{equation*}
for some smooth function \( \nu_{X\wedge Y}\colon M\to \bR \).  This
is the basis of the construction of the multi-moment map.  However,
the set of decomposable elements \( \Lel X\wedge \Lel Y \) in \(
\Lambda^2\lie g \) for which \( \Lel X \) and \( \Lel Y \) commute is
a complicated variety.  It is more natural to consider the following
submodule of \( \Lambda^2\lie g \).

\begin{definition}
  \label{def:Liekernel}
  The \emph{Lie kernel} \( \Pg \) of a Lie algebra \( \g \) is the \(
  \g \)-module
  \begin{equation*}
    \Pg:=\ker\left(L\colon \Lambda^2\g\to\g\right),
  \end{equation*}
  where \( L \) is the linear map induced by the Lie bracket.
\end{definition}

The previous calculation may now be extended to elements of the Lie
kernel.  For a bivector \( p = \sum_{j=1}^k X_j\wedge Y_j \) we write
\begin{equation*}
  p \hook c := \sum_{j=1}^k c(X_j,Y_j,\cdot).
\end{equation*}

\begin{lemma}
  \label{lem:kercalc}
  Suppose \( G \) is a group of symmetries of a strong geometry \(
  (M,c) \).  Let \( \Lel p=\sum_{j=1}^k\Lel X_j\wedge \Lel Y_j \) be
  an element of the Lie kernel \( \Pg \) and let \( p = \sum_{j=1}^k
  X_j\wedge Y_j \) be the corresponding bivector on~\( M \).  Then
  \begin{equation}
    \label{eq:kercalc}
    d(p \hook c) = 0.
  \end{equation}
\end{lemma}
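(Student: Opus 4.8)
The plan is to differentiate \( p \hook c \) term by term and show that, although the individual summands \( c(X_j,Y_j,\cdot) \) need not be closed, their combined exterior derivative assembles into the contraction of \( c \) with the vector field generated by \( L(\Lel p) \), which vanishes by hypothesis. First I would write each summand in the form already used above, \( c(X_j,Y_j,\cdot) = Y_j \hook (X_j \hook c) \), and apply \( d \). By \eqref{eq:X-c} the two-form \( X_j \hook c \) is closed, so Cartan's formula \( \Ld_{Y_j} = d\circ(Y_j\hook\cdot) + (Y_j\hook\cdot)\circ d \) collapses to
\begin{equation*}
  d\bigl(Y_j \hook (X_j \hook c)\bigr) = \Ld_{Y_j}(X_j \hook c),
\end{equation*}
the term \( Y_j \hook d(X_j \hook c) \) being zero.

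Next I would invoke the commutator identity \( \Ld_{Y_j}(X_j \hook c) = [Y_j,X_j]\hook c + X_j \hook \Ld_{Y_j}c \) and use \( \Ld_{Y_j}c = 0 \), which holds because \( Y_j \) generates a symmetry of \( c \). This leaves \( \Ld_{Y_j}(X_j \hook c) = [Y_j,X_j]\hook c \), where \( [Y_j,X_j] \) is the Lie bracket of vector fields. Since the assignment \( \Lel Z \mapsto Z \) carries brackets in \( \g \) to brackets of the generated vector fields (up to an overall sign fixed by the action convention), summing over \( j \) gives
\begin{equation*}
  d(p \hook c) = \sum_{j=1}^k [Y_j,X_j]\hook c = \pm\, W \hook c,
\end{equation*}
where \( W \) is the vector field generated by \( \sum_{j=1}^k [\Lel X_j,\Lel Y_j] = L(\Lel p) \). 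As \( \Lel p \) lies in the Lie kernel \( \Pg = \ker L \), we have \( L(\Lel p) = 0 \), so \( W = 0 \) and hence \( d(p \hook c) = 0 \).

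The step I expect to carry the real content is recognising that the per-summand derivatives do \emph{not} vanish separately — in contrast to the commuting case treated just before the definition of \( \Pg \) — and that it is precisely the collapse \( \sum_{j} [\Lel X_j,\Lel Y_j] = 0 \) that forces the total to be closed; this is exactly why the Lie kernel is the correct domain. The only remaining care is bookkeeping: fixing the sign convention relating the bracket of generated vector fields to the bracket in \( \g \) (an anti-homomorphism for a left action). This sign, however, does not affect the conclusion, since the generating element \( L(\Lel p) \) is itself zero.
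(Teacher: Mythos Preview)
Your proof is correct and follows essentially the same approach as the paper: both use Cartan's formula together with \( d(X_j\hook c)=0 \) and the commutator identity \( [\Ld_{Y_j},X_j\hook\,] = [Y_j,X_j]\hook \) to reduce \( d(p\hook c) \) to the contraction of \( c \) with the vector field generated by \( \sum_j[\Lel X_j,\Lel Y_j]=L(\Lel p)=0 \). The only cosmetic difference is that the paper runs the argument in the opposite direction (starting from \( 0=\sum_j[Y_j,X_j]\hook c \) and expanding fully via Cartan, using \( dc=0 \) and \( d(Y_j\hook c)=0 \) in place of your direct appeal to \( \Ld_{Y_j}c=0 \)), but the substance is identical.
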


\begin{proof}
  The condition that \( \Lel p \) lies in \( \Pg \) is that \(
  0=L(\Lel p)=\sum_{j=1}^k [\Lel X_j,\Lel Y_j] \).  This together with
  \eqref{eq:X-c} and \( dc=0 \) gives
  \begin{equation*}
    \begin{split}
      0&=\sum_{j=1}^k[Y_j,X_j] \hook c = \sum_{j=1}^k
      \left([\Ld_{Y_j},X_j\hook]c\right)\\
      &=\sum_{j=1}^k d(Y_j\hook X_j\hook c) + Y_j \hook d(X_j\hook c)
      - X_j\hook d(Y_j \hook c) - X_j \hook Y_j \hook dc \\
      &=\sum_{j=1}^k d(Y_j\hook X_j\hook c) = d (p\hook c),
    \end{split}
  \end{equation*}
  as required.
\end{proof}

Thus if for example \( b_1(M) = 0 \), there is a smooth function \(
\nu_p\colon M\to \bR \) with \( d\nu_p = p\hook c \) for each \( \Lel
p \in \Pg \).

We are now able to define the main object to be studied in this paper.

\begin{definition}
  \label{def:mmmap}
  Let \( (M,c) \) be a strong geometry with a symmetry group \( G \).
  A \emph{multi-moment map} is an equivariant map \( \nu\colon
  M\to\Pg^* \) satisfying
  \begin{equation}
    \label{eq:mmmapdefrel}
    d\inp \nu{\Lel p} = p \hook c
  \end{equation}
  for each \( \Lel p \in \Pg \).
\end{definition}

Note that for \( G \) Abelian \( \Pg = \Lambda^2\lie g \).  On the
other hand if \( G \) is a compact simple Lie group then the Lie
kernel is a module familiar from a special class of Einstein
manifolds.  Indeed Wolf \cite[Corollary 10.2]{Wolf:isotropy}
(cf. \cite[Proposition 7.49]{Besse:Einstein}) showed that in this case
\( \Lambda^2\lie g = \g \oplus \Pg \) as a sum of irreducible modules,
so \( \SO(\dim G)/G \) is an isotropy irreducible space.

\section{Existence and uniqueness}
\label{sec:exc}

As mentioned in the introduction, one of the principal advantages of
multi-moment maps over covariant moment maps is that one can prove
that multi-moment maps are guaranteed to exist under a wide range of
circumstances.

We start first with topological criteria.
 
\begin{theorem}
  \label{thm:mmpexc}
  Let \( (M,c) \) be a strong geometry with a symmetry group \( G \)
  and assume that \( b_1(M)=0 \).  If either
  \begin{compactenum}
  \item \( G \) is compact, or
  \item \( M \) is compact and orientable, and \( G \) preserves a
    volume form on \( M \),
  \end{compactenum}
  then there exists a multi-moment map \( \nu\colon M\to\Pg^* \).
\end{theorem}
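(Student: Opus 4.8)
The plan is to first produce a map satisfying the defining relation \eqref{eq:mmmapdefrel} but not necessarily equivariant, and then to correct it by a constant element of \( \Pg^* \). By Lemma~\ref{lem:kercalc}, for each \( \Lel p\in\Pg \) the one-form \( p\hook c \) is closed, so the hypothesis \( b_1(M)=0 \) guarantees a primitive. Fixing a basis of the finite-dimensional space \( \Pg \) and extending linearly, I obtain a smooth map \( \tilde\nu\colon M\to\Pg^* \) with \( d\inp{\tilde\nu}{\Lel p}=p\hook c \) for all \( \Lel p \). Assuming \( M \) connected (otherwise one argues componentwise), \( \tilde\nu \) is determined up to the addition of a constant \( \mu\in\Pg^* \), and the only property that can fail is \( G \)-equivariance.

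The key observation is that the failure of infinitesimal equivariance is constant on \( M \). Since \( \Pg \) is a \( \g \)-submodule of \( \Lambda^2\g \), the adjoint action gives \( [\Lel X,\Lel p]\in\Pg \) for \( \Lel X\in\g \), so \eqref{eq:mmmapdefrel} applies to it. Writing \( X \) for the vector field generated by \( \Lel X \) and using \( \Ld_X c=0 \) together with the commutation of \( d \) and \( \Ld_X \), a short calculation relating \( \Ld_X(p\hook c) \) to \( [\Lel X,\Lel p]\hook c \) yields
\begin{equation*}
  d\bigl(X\inp{\tilde\nu}{\Lel p}+\inp{\tilde\nu}{[\Lel X,\Lel p]}\bigr)=0.
\end{equation*}
Hence \( f(\Lel X,\Lel p):=X\inp{\tilde\nu}{\Lel p}+\inp{\tilde\nu}{[\Lel X,\Lel p]} \) is constant, infinitesimal equivariance of \( \tilde\nu \) is exactly the vanishing of \( f \), and replacing \( \tilde\nu \) by \( \tilde\nu+\mu \) changes \( f \) by \( \inp{\mu}{[\Lel X,\Lel p]} \). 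Thus \( f \) is a \( \Pg^* \)-valued Lie algebra one-cocycle, and the task reduces to realising it as the coboundary of a suitable constant \( \mu \); the two hypotheses supply two different ways to do this.

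In case~(i), with \( G \) compact, I would bypass the cocycle and average directly over the group. Using \( g^*c=c \) and the fact that the bivector field of \( \Lel p \) pushes forward under \( g \) to that of \( \Ad_g\Lel p \), one checks that \( x\mapsto g\cdot\tilde\nu(g^{-1}x) \) again satisfies \eqref{eq:mmmapdefrel} for every \( g \). Setting \( \nu(x):=\int_G g\cdot\tilde\nu(g^{-1}x)\,dg \) against normalised Haar measure, the relation \eqref{eq:mmmapdefrel} survives the averaging, while left-invariance of the measure makes \( \nu \) equivariant.

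In case~(ii) the group may be non-compact, so I would instead average over \( M \) using the invariant volume form \( \vol \). Put \( \mu:=\vol(M)^{-1}\int_M\tilde\nu\,\vol\in\Pg^* \). For any function \( h \) one has \( X(h)\,\vol=\Ld_X(h\,\vol)=d\bigl(X\hook(h\,\vol)\bigr) \), because \( \Ld_X\vol=0 \), so \( \int_M X(h)\,\vol=0 \) by Stokes' theorem on the closed manifold \( M \). Integrating the constant \( f(\Lel X,\Lel p) \) against \( \vol \) therefore annihilates the term \( X\inp{\tilde\nu}{\Lel p} \) and leaves \( f(\Lel X,\Lel p)=\inp{\mu}{[\Lel X,\Lel p]} \), so that \( \nu:=\tilde\nu-\mu \) satisfies \( f^{\nu}=0 \); being infinitesimally equivariant and still obeying \eqref{eq:mmmapdefrel}, it is the desired multi-moment map once \( G \) is connected. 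The main obstacle is the second paragraph: pinning down the precise sense in which the equivariance defect is a \emph{constant}, finite-dimensional cocycle, since it is exactly this rigidity that allows the two very different averaging mechanisms---over \( G \) and over \( M \)---each to remove it.
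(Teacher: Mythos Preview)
Your approach mirrors the paper's: construct a non-equivariant \( \tilde\nu \) from primitives using \( b_1(M)=0 \), then in case~(i) average over \( G \) and in case~(ii) normalise to mean zero over \( M \)---exactly the paper's two moves, with your cocycle discussion supplying extra detail the paper omits.

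One point to tighten in case~(ii): you verify only \emph{infinitesimal} equivariance and then invoke connectedness of \( G \), which the theorem does not assume. In fact your construction already yields full equivariance. Since \( G \) preserves \( \vol \), for each \( g\in G \) the map \( m\mapsto\tilde\nu(g\cdot m) \) has the same mean \( \mu \) as \( \tilde\nu \); and since \( g^*c=c \) and the bivector field of \( \Lel p \) pushes forward to that of \( \Ad_g\Lel p \), one has \( d\inp{\tilde\nu(g\cdot\,\cdot\,)}{\Lel p}=(\Ad_{g^{-1}}p)\hook c=d\inp{\tilde\nu}{\Ad_{g^{-1}}\Lel p} \). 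Both functions thus have the same differential and the same mean, hence agree on connected \( M \), giving \( \nu(g\cdot m)=g\cdot\nu(m) \) for every \( g\in G \) without any connectedness hypothesis on the group.
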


\begin{proof}
  Working component by component, we may assume that \( M \) is
  connected.  As noted after Lemma~\ref{lem:kercalc} the condition \(
  b_1(M) = 0 \) ensures that there are functions \( \nu_{\Lel p} \)
  with \( d\nu_p = p\hook c \) for each \( \Lel p \in \Pg \).
  However, each of these functions may be adjusted by adding a real
  constant.  To build a multi-moment map \( \nu \) via \( \inp\nu{\Lel
    p} = \nu_p \) we need to ensure equivariance.  In the two cases
  above this may be achieved by either averaging over \( G \) or over
  \( M \).  In the second case, one chooses \( \nu_p \) with mean
  value \( 0 \).  In the first case, one chooses a basis \( (\Lel p_i)
  \) of \( \Pg \) and puts \( \nu(m) = \int_G \sum_i
  \Ad_g^*(\nu_{p_i}(g\cdot m)) \vol_G \).  In both cases
  equation~\eqref{eq:mmmapdefrel} is satisfied, and \( \nu \) is a
  multi-moment map.
\end{proof}

As we saw in the above proof, one crucial point is making a canonical
choice of function \( \nu_p \).  The following situation occurs in
many examples and provides a differential geometric criterion for a
construction of multi-moment maps.

\begin{proposition}
  \label{prop:exact}
  Suppose \( G \) is a group of symmetries of a strong geometry \(
  (M,c) \) and that there exists a \( G \)-invariant \( 2 \)-form \(
  b\in\Omega^2(M) \) such that \( db = c \).  Then \( \nu\colon M\to
  \Pg^* \) given by
  \begin{equation}
    \label{eq:exmmmap}
    \inp \nu{\Lel p} = b(p)
  \end{equation}
  is a multi-moment map.
\end{proposition}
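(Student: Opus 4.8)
The plan is to verify directly that the map $\nu$ defined by $\inp{\nu}{\Lel p} = b(p)$ satisfies both defining conditions of a multi-moment map: the differential relation \eqref{eq:mmmapdefrel} and equivariance. Since $b$ is a fixed $2$-form, the assignment $\Lel p \mapsto b(p)$ is linear in $\Lel p$, so $\nu(m)$ genuinely lands in $\Pg^*$ for each $m \in M$, and no choice of constants or averaging is needed—this is precisely the canonical choice alluded to after Theorem~\ref{thm:mmpexc}.

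**First I would check the differential relation.** Fix $\Lel p = \sum_{j} \Lel X_j \wedge \Lel Y_j \in \Pg$ with corresponding bivector field $p = \sum_j X_j \wedge Y_j$ on $M$. By definition $\inp{\nu}{\Lel p} = b(p) = \sum_j b(X_j, Y_j)$, and I must show $d\inp{\nu}{\Lel p} = p \hook c$. The key computation uses $db = c$ together with Cartan's formula. For a single decomposable term, I would expand

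\begin{equation*}
  d\bigl(b(X_j, Y_j)\bigr) = d\bigl(Y_j \hook X_j \hook b\bigr)
  = \Ld_{Y_j}(X_j \hook b) - Y_j \hook d(X_j \hook b),
\end{equation*}

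and then rewrite $\Ld_{Y_j}(X_j \hook b) = [\Ld_{Y_j}, X_j\hook]\,b + X_j \hook \Ld_{Y_j} b$ and $d(X_j \hook b) = \Ld_{X_j} b - X_j \hook db$. Using $G$-invariance of $b$ to kill the Lie-derivative terms $\Ld_{X_j} b = \Ld_{Y_j} b = 0$, and the commutator identity $[\Ld_{Y_j}, X_j \hook] = [Y_j, X_j]\hook$, the terms collapse so that after summing over $j$ the contribution of the brackets is $\bigl(\sum_j [Y_j, X_j]\bigr) \hook b$, which vanishes exactly because $L(\Lel p) = \sum_j [\Lel X_j, \Lel Y_j] = 0$ by the Lie kernel condition. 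What survives is the term coming from $db = c$, namely $\sum_j Y_j \hook X_j \hook c = p \hook c$, giving \eqref{eq:mmmapdefrel}. This mirrors the bookkeeping already carried out in the proof of Lemma~\ref{lem:kercalc}, so I expect it to go through cleanly.

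**Second I would verify equivariance.** Since $b$ is $G$-invariant and the $\g$-module structure on $\Pg \subset \Lambda^2\g$ is induced by the adjoint action, I would check that $\inp{\nu(g\cdot m)}{\Lel p} = \inp{\Ad_g^* \nu(m)}{\Lel p} = \inp{\nu(m)}{\Ad_{g^{-1}}\Lel p}$ for all $\Lel p$. This reduces to the compatibility of $b(p)$ with pushing forward the generating vector fields under the $G$-action, which follows from $g^*b = b$; the vector fields generated by $\Ad_{g^{-1}}\Lel p$ are the pushforwards under $g^{-1}$ of those generated by $\Lel p$.

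**The main obstacle** is the careful Cartan-calculus bookkeeping in the first step—making sure every Lie-derivative and interior-product identity is applied with the correct signs so that the invariance of $b$ and the Lie kernel condition eliminate exactly the right terms. The conceptual content is light, but the contraction conventions (particularly the ordering $Y_j \hook X_j \hook$ versus $X_j \hook Y_j \hook$ and the resulting sign) must be tracked consistently to land on $p \hook c$ rather than its negative.
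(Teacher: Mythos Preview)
Your proposal is correct and follows essentially the same route as the paper. The paper's proof is more terse: it disposes of equivariance in a single clause (``since \(b\) is invariant'') and, for the differential relation, simply notes that \(d(p\hook b)=p\hook db=p\hook c\) by the calculation already done in Lemma~\ref{lem:kercalc}, whereas you re-run that Cartan-calculus computation explicitly.
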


\begin{proof}
  The map \( \nu \) is equivariant, since \( b \) is invariant.  We
  have \( \nu_p = b(p) \) with \( d(b(p)) = d (p\hook b) = p\hook dc
  \) by the calculation in Lemma~\ref{lem:kercalc}, so
  equation~\eqref{eq:mmmapdefrel} is satisfied, as required.
\end{proof}

Let us now turn to algebraic criteria for multi-moment maps.  This
involves study of the Lie kernel.  The dual of the exact sequence
\begin{equation*}
  \begin{CD}
    0 @>>> \Pg @>\iota>> \Lambda^2\g @>L>> \g
  \end{CD}
\end{equation*}
is the sequence
\begin{equation}
  \label{eq:exseq1}
  \begin{CD}
    \g^* @>d>> \Lambda^2\g^* @>\pi>> \Pg^* @>>> 0,
  \end{CD}
\end{equation}
which is also exact.  Hence the dual~\( \Pg^* \) of the Lie kernel can
be identified with the quotient space \( \Lambda^2\g^* / d(\g^*) \).
As \( B^1(\g) = d(\g^*) \) is a subspace of \( Z^2(\g) = \ker(d \colon
\Lambda^2\g^* \to \Lambda^3\g^*) \), we have an induced linear map
\begin{equation*}
  \dP\colon\Pg^* \to \Lambda^3\g^*.
\end{equation*}
More concretely given \( \beta \in \Pg^* \), we choose \( \tilde\beta
\in \pi^{-1}(\beta) \) and then \( \dP\beta = d\tilde\beta \).

Let \(b_n(\g)\) denote the dimension of the \( n \)th Lie algebra
cohomology group, so \( b_n(\g) = \dim H^n(\g) = \dim Z^n(\g) - \dim
B^n(\g) \).  The next result follows directly from the above
discussion.

\begin{proposition}
  \label{prop:dmap}
  The linear map \( \dP \colon \Pg^* \to \Lambda^3\g^* \) is a \( \g
  \)-morphism with image contained in \( Z^3(\g) \).  It is injective
  if and only if \( b_2(\g)=0 \).  If this condition holds then \( \dP
  \) is an isomorphism from \( \Pg^* \) onto \( Z^3(\g) \) if and only
  if \( b_3(\g)=0 \).  \qed
\end{proposition}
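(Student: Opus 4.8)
The plan is to read off all three assertions from the identification \( \Pg^*=\Lambda^2\g^*/d(\g^*) \) recorded in~\eqref{eq:exseq1}, together with the observation that \( \dP \) is simply the map induced by the differential \( d\colon\Lambda^2\g^*\to\Lambda^3\g^* \). All three statements then reduce to the definitions of the Lie algebra cohomology groups \( H^2(\g) \) and \( H^3(\g) \), so the only real content is a well-definedness check followed by a kernel and an image computation.

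First I would verify that \( \dP \) has the stated structural properties. If \( \tilde\beta \) and \( \tilde\beta' \) both project to \( \beta\in\Pg^* \), then by exactness of~\eqref{eq:exseq1} their difference lies in \( \ker\pi=d(\g^*) \), so \( d\tilde\beta-d\tilde\beta'=d(\tilde\beta-\tilde\beta')=0 \) because \( d^2=0 \); hence \( \dP\beta \) is independent of the chosen lift. The same relation \( d^2=0 \) gives \( d(d\tilde\beta)=0 \), so \( \img\dP\subseteq Z^3(\g) \). Finally \( \pi \) and \( d \) are both \( \g \)-morphisms, and \( \dP \) is their composite on representatives, so \( \dP \) is a \( \g \)-morphism.

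Next I would compute the kernel to obtain the injectivity criterion. By definition \( \dP\beta=0 \) exactly when some (equivalently, any) lift satisfies \( d\tilde\beta=0 \), i.e. \( \tilde\beta\in Z^2(\g) \); thus \( \ker\dP=\pi\big(Z^2(\g)\big)=Z^2(\g)/d(\g^*) \). Since \( d(\g^*)\subseteq Z^2(\g) \) this quotient is precisely \( H^2(\g) \), so \( \dP \) is injective if and only if \( b_2(\g)=\dim H^2(\g)=0 \).

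Finally, assuming \( b_2(\g)=0 \) so that \( \dP \) is already injective, I would identify its image as \( \img\dP=d(\Lambda^2\g^*) \), the space of exact \( 3 \)-cochains. As this image lies inside \( Z^3(\g) \), the map \( \dP \) surjects onto \( Z^3(\g) \) exactly when every \( 3 \)-cocycle is exact, that is when \( H^3(\g)=0 \); combined with injectivity this gives the isomorphism onto \( Z^3(\g) \) precisely when \( b_3(\g)=0 \). I expect no serious obstacle in this argument: the one point that must be handled carefully is the cohomological bookkeeping---checking that \( \ker\pi=d(\g^*) \) really is the degree-two coboundary space sitting inside \( Z^2(\g) \), so that \( \ker\dP \) is genuinely \( H^2(\g) \) and the image is genuinely the degree-three coboundaries, rather than groups shifted in degree.
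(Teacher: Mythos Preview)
Your proposal is correct and follows exactly the approach the paper intends: the paper marks the proposition with \qed\ and says it ``follows directly from the above discussion,'' that discussion being precisely the identification \( \Pg^*\cong\Lambda^2\g^*/d(\g^*) \) from~\eqref{eq:exseq1} together with the observation that \( \dP \) is induced by the Chevalley--Eilenberg differential. You have simply spelled out the kernel and image computations that the authors leave implicit.
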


We will see that this distinguishes a class of Lie groups and Lie
algebras that play a special role in the theory of multi-moment maps
analogous to the role of semi-simple groups in symplectic geometry.
We therefore make a definition.

\begin{definition}
  \label{def:23triv}
  A connected Lie group \( G \) or its Lie algebra \( \g \) that
  satisfies \( b_2(\g) = 0 = b_3(\g) \) will be called
  \emph{(cohomologically) \((2,3)\)-trivial}.
\end{definition}

\begin{theorem}
  \label{thm:algmmpexc}
  Let \( (M,c) \) be a strong geometry with connected
  \((2,3)\)-trivial symmetry group \( G \) acting nearly effectively.
  Then there exists a unique multi-moment map \( \nu\colon M\to\Pg^*
  \).

  More generally, if just \( b_2(\g) = 0 \), then multi-moment maps
  for nearly effective actions of \( G \) are unique when they exist.
\end{theorem}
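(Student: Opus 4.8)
The plan is to handle both assertions through the exact sequence~\eqref{eq:exseq1} and the map $\dP$, reducing everything to a single vanishing statement about the $\g$-invariants of $\Pg^*$. I may assume $M$ connected, arguing components separately. Recall from Proposition~\ref{prop:dmap} that $b_2(\g)=0$ makes $\dP\colon\Pg^*\to Z^3(\g)$ an injective $\g$-morphism with image $B^3(\g)$, and that $(2,3)$-triviality upgrades this to an isomorphism onto $Z^3(\g)=B^3(\g)$.

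I would treat uniqueness first. Given two multi-moment maps $\nu_1,\nu_2$, their difference $\nu=\nu_1-\nu_2$ satisfies $d\inp\nu{\Lel p}=0$ for every $\Lel p\in\Pg$ by~\eqref{eq:mmmapdefrel}; since $M$ is connected each $\inp\nu{\Lel p}$ is constant, so $\nu$ is a constant map with value some $c_0\in\Pg^*$. Equivariance of $\nu_1,\nu_2$ forces $c_0$ to be $G$-fixed, hence (as $G$ is connected) $c_0\in(\Pg^*)^\g$. Thus uniqueness is \emph{equivalent} to the purely algebraic claim $(\Pg^*)^\g=0$: conversely, were $c_0\in(\Pg^*)^\g$ nonzero then $\nu_1+c_0$ would be a second multi-moment map. (This is why only $b_2(\g)=0$ is needed; near effectiveness enters merely in identifying the abstract $\g$ with the algebra of generating vector fields.)

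The main obstacle is therefore to prove $(\Pg^*)^\g=0$ whenever $b_2(\g)=0$. Since $\dP$ is an injective $\g$-morphism onto $B^3(\g)$, restricting to invariants gives $(\Pg^*)^\g\cong(B^3(\g))^\g$, the $\g$-invariant exact $3$-cochains. Here I would invoke the classical fact that a $\g$-invariant element of $\Lambda^\bullet\g^*$ is automatically closed, so $\omega=\dP(c_0)$ lies in $(\Lambda^3\g^*)^\g\cap B^3(\g)$; equivalently $c_0$ lies in the kernel of $(\Lambda^3\g^*)^\g\to H^3(\g)$, and the crux is that $b_2(\g)=0$ forces this kernel to vanish. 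I would attack this by noting that for invariant $\omega$ each contraction $\iota_{\Lel X}\omega$ is closed, hence exact by $b_2(\g)=0$, writing $\iota_{\Lel X}\omega=d\gamma_{\Lel X}$, and then using the cyclic symmetry of $\omega$ together with $\g$-invariance — which makes $(\Lel X,\Lel Y)\mapsto\gamma_{\Lel X}(\Lel Y)$ an invariant bilinear form on $\g$ — to produce an \emph{invariant} primitive, whence $\omega=0$. Checking that this closes without further hypotheses is the delicate point; worked examples strongly suggest it always does when $b_2(\g)=0$, but the bookkeeping with the non-canonical $\gamma_{\Lel X}$ is exactly where care is required, and this is the step I expect to be hardest.

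For existence in the $(2,3)$-trivial case I would proceed in two stages. First, the same vanishing dualizes to $(\Pg)_\g=\Pg/[\g,\Pg]=0$; since the connected group $G$ acts trivially on $H^1(M)$, the $\g$-equivariant map $\Lel p\mapsto[p\hook c]\in H^1(M)$ factors through $(\Pg)_\g$ and so vanishes. Hence each closed one-form $p\hook c$ (closed by Lemma~\ref{lem:kercalc}) is exact, and primitives $\nu_{\Lel p}$ with $d\nu_{\Lel p}=p\hook c$ exist, depending linearly on $\Lel p$ and determined up to a constant. Second, passing to $C^\infty(M)/\bR$ yields a genuinely $\g$-equivariant map $\Pg\to C^\infty(M)/\bR$, and I must lift it equivariantly to $C^\infty(M)$, i.e. fix the constants compatibly with $G$. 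This is the standard moment-map equivariance step: the defect of a chosen primitive is a constant $\Pg^*$-valued cocycle whose obstruction is controlled by the low-degree Lie algebra cohomology of $\g$ and is removed by $(2,3)$-triviality. Uniqueness of the resulting $\nu$ is then the statement already established. Overall I expect the algebraic vanishing of $(\Pg^*)^\g$ to be the genuine content, with the equivariance bookkeeping in the existence step the secondary technical hurdle.
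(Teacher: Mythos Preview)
Your uniqueness reduction to $(\Pg^*)^\g=0$ is correct, but thereafter your route diverges sharply from the paper's. The paper does not argue obstruction-theoretically at all: it introduces the equivariant map $\Psi\colon M\to Z^3(\g)$ given by $\inp{\Psi(m)}{\Lel X\wedge\Lel Y\wedge\Lel Z}=c_m(X,Y,Z)$ and, using that $(2,3)$-triviality makes $\dP\colon\Pg^*\to Z^3(\g)$ a $\g$-isomorphism, simply \emph{defines} $\nu=\dP^{-1}\circ\Psi$. The multi-moment condition is then verified directly: the identity $\inp{\dP\alpha}{\Lel Z\wedge\Lel p}=-\inp\alpha{\ad_{\Lel Z}\Lel p}$ shows that surjectivity of $\dP^*$ forces every $\Lel p\in\Pg$ to decompose as $\sum_i\ad_{\Lel Z_i}\Lel q_i$ with $\Lel q_i\in\Pg$, and for such $\Lel p$ one checks $d\inp\nu{\Lel p}=p\hook c$ in two lines from Lemma~\ref{lem:kercalc}. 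Uniqueness for $b_2(\g)=0$ is then immediate from the injectivity of $\dP$. Note that the decomposition $\Lel p=\sum_i\ad_{\Lel Z_i}\Lel q_i$ is exactly the statement $\Pg=[\g,\Pg]$, i.e.\ $(\Pg^*)^\g=0$, so the paper obtains your ``main obstacle'' as a byproduct of dualising $\dP$ rather than via your invariant-bilinear-form route.

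Your proposal has two genuine gaps. The bilinear form $(\Lel X,\Lel Y)\mapsto\gamma_{\Lel X}(\Lel Y)$ is well-defined only modulo $Z^1(\g)$, and the passage from ``invariant exact $3$-form'' to ``zero'' amounts to an injectivity statement for $(\Lambda^3\g^*)^\g\to H^3(\g)$ that you have not established; you flag this yourself, but it is more than bookkeeping. More seriously, in your existence argument the equivariance defect is a class in $H^1(\g,\Pg^*)$---cohomology with \emph{non-trivial} coefficients---and you offer no mechanism by which $b_2(\g)=b_3(\g)=0$ (conditions on trivial coefficients) would kill it. The paper's construction $\nu=\dP^{-1}\Psi$ is precisely what supplies the equivariant lift for free; without introducing $\Psi$ I do not see how you would close this step.
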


\begin{proof}
  The invariant three-form \( c \) determines a \( G \)-equivariant
  map \( \Psi\colon M\to Z^3(\g) \), given by
  \begin{equation}
    \label{eq:Psi}
    \inp\Psi{\Lel X \wedge \Lel Y \wedge \Lel Z}
    = c(X,Y,Z)
  \end{equation}
  for \( \Lel X,\Lel Y,\Lel Z \in \g \).  When \( b_2(\g) = 0 =
  b_3(\g) \), for each \( m\in M \) there is a unique element \(
  \nu(m)\in\Pg^* \) satisfying \( \dP\nu(m)=\Psi(m) \).  Since \( \dP
  \) is a \( G \)-morphism, it follows that \( \nu\colon M\to \Pg^* \)
  is also a \( G \)-equivariant.

  We claim that \( \nu \) is a multi-moment map.  Note that, in
  general \( \dP\colon \Pg^* \to Z^3(\g) \cap (\g\wedge\Pg)^* \).  The
  assumption \( b_2(\g) = 0 \), gives that the dual map \( \dP^* \) is
  a surjection \( Z^3(\g)^*\cap(\g\wedge \Pg) \to \Pg \).  This dual
  map is given as minus the adjoint action, since
  \begin{equation}
    \label{eq:adjoint}
    \begin{split}
      &\inp{\dP\alpha}{\Lel Z\wedge \Lel p} = \inp{\dP\alpha}{\Lel
        Z\wedge\sum_{i=1}^k \Lel X_i\wedge \Lel Y_i} \\
      &=-\sum_{i=1}^k (\alpha([\Lel Z,\Lel X_i],\Lel Y_i) +
      \alpha([\Lel X_i,\Lel Y_i],\Lel Z) + \alpha([\Lel Y_i,\Lel
      Z],\Lel X_i)) = -\inp\alpha{\ad_{\Lel Z}(\Lel p)},
    \end{split}
  \end{equation}
  for \( \Lel Z\in \g \), \( \Lel p = \sum_{i=1}^k \Lel X_i\wedge \Lel
  Y_i \in \Pg \).  Hence we may write any \( \Lel p\in\Pg \) in the
  form \( \Lel p = \sum_{i=1}^r \ad_{\Lel Z_i} (\Lel q_i) \), with \(
  \Lel Z_i \in \g \) and \( \Lel q_i \in \Pg \).  Now the function
  \begin{equation*}
    \nu_{\Lel p} = -\sum_{i=1}^r \inp\Psi{\Lel Z_i\wedge \Lel q_i} 
    = -\sum_{i=1}^r c(Z_i\wedge q_i)
  \end{equation*}
  satisfies \( d\nu_{\Lel p} = -\sum_{i=1}^r \Ld_{Z_i}(q_i\hook c) = p
  \hook c \), since \( d(q_i\hook c) = 0 \) by \eqref{eq:kercalc}.
  Moreover we have that
  \begin{equation*}
    \nu_{\Lel p}(m) = -\sum_{i=1}^r \inp{\dP\nu(m)}{\Lel Z_i\wedge
      \Lel q_i} = \sum_{i=1}^r \inp{\nu(m)}{\ad_{\Lel Z_i}(\Lel q_i)} =
    \inp{\nu(m)}{\Lel p}. 
  \end{equation*}
  Thus \( \nu \) is a multi-moment map.

  For the last part of the Theorem, note that a multi-moment map \(
  \nu \) defines elements \( \nu(m) \in \Pg^* \) and the above
  calculations show that \( \dP(\nu(m)) = \Psi(m) \).  However, \(
  b_2(\g) = 0 \) implies that there is at most one solution \( \nu(m)
  \) to this equation, so \( \nu \) is then unique.
\end{proof}

Note that any semi-simple Lie group \( G \) has \( b_1(\g) = 0 =
b_2(\g) \).  Also any reductive group \( G \) with one-dimensional
centre still has \( b_2(\g) = 0 \); in particular this applies to \( G
= \Un(n) \).  So when multi-moment maps for these group actions exist,
they are unique.  However, any simple Lie group \( G \) has \( b_3(\g)
= 1 \), so there can be obstructions to existence.

\section{(2,3)-trivial Lie algebras}
\label{sec:solalg}

In this section we give a structural description of the \(
(2,3)\)-trivial Lie algebras, list them in low dimensions and show
that there are many examples.

\begin{theorem}
  \label{thm:Liealg}
  Any non-trivial finite-dimensional Lie algebra \( \g \ne \bR, \bR^2
  \) satisfying \( b_3(\g)=0 \) is solvable and not nilpotent.  If in
  addition we have that \( b_2(\g)=0 \) then \( \g \) cannot be a
  direct sum of two non-trivial subalgebras, and its derived algebra
  is a codimension one ideal.
\end{theorem}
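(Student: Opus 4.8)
The plan is to establish the four assertions in turn, drawing on Whitehead's lemmas, Poincaré duality for unimodular Lie algebras, Dixmier's positivity theorem, the Künneth formula, and the Hochschild--Serre spectral sequence.

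\emph{Solvability.} First I would take a Levi decomposition \( \g = \lie s \ltimes \lie r \), with radical \( \lie r \) and semisimple Levi subalgebra \( \lie s \). The quotient \( q \colon \g \to \g/\lie r \cong \lie s \) is a Lie algebra homomorphism restricting to the identity on \( \lie s \); thus \( q \circ i = \mathrm{id}_{\lie s} \) for the inclusion \( i \colon \lie s \hookrightarrow \g \), and on cohomology \( i^* \circ q^* = \mathrm{id} \) on \( H^*(\lie s) \). Hence \( q^* \) is injective and \( b_k(\g) \ge b_k(\lie s) \) for all \( k \). A non-zero semisimple \( \lie s \) carries the Cartan--Killing \( 3 \)-cocycle, so \( b_3(\lie s) \ge 1 \) and \( b_3(\g) \ge 1 \), contradicting \( b_3(\g)=0 \). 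Therefore \( \lie s = 0 \) and \( \g \) is solvable.

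\emph{Non-nilpotency.} Suppose \( \g \) were nilpotent, of dimension \( n \). The only nilpotent algebras of dimension at most two are \( \bR \) and \( \bR^2 \), which are excluded, so \( n \ge 3 \). Nilpotent algebras are unimodular, so Poincaré duality \( b_k(\g) = b_{n-k}(\g) \) holds; and by Dixmier every Betti number of a non-zero nilpotent algebra is positive. Hence \( b_3(\g) = b_{n-3}(\g) \ge 1 \), a contradiction, and \( \g \) is not nilpotent.

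\emph{Indecomposability (now using \( b_2(\g)=0 \)).} Were \( \g = \g_1 \oplus \g_2 \) a direct sum of two non-trivial ideals, Künneth would give \( b_2(\g) = b_2(\g_1) + b_1(\g_1)b_1(\g_2) + b_2(\g_2) \ge b_1(\g_1)\,b_1(\g_2) \). Each \( \g_i \) is a non-zero solvable ideal, so \( [\g_i,\g_i]\subsetneq\g_i \) and \( b_1(\g_i)\ge 1 \); hence \( b_2(\g)\ge 1 \), contradicting \( b_2(\g)=0 \). So no such splitting exists.

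\emph{Codimension-one derived algebra, and the main obstacle.} Since \( \g \) is solvable and non-zero, \( b_1(\g)=\dim\g/[\g,\g]\ge 1 \), and it remains to prove \( b_1(\g)=1 \). Writing \( \lie n=[\g,\g] \) (nilpotent) and \( \lie a=\g/\lie n\cong\bR^m \) with \( m=b_1(\g) \), I would suppose \( m\ge 2 \) and run the Hochschild--Serre spectral sequence \( E_2^{p,q}=H^p(\lie a;H^q(\lie n)) \) of the extension \( 0\to\lie n\to\g\to\lie a\to 0 \). Vanishing of \( b_2(\g) \) forces \( E_\infty^{2,0}=E_\infty^{1,1}=E_\infty^{0,2}=0 \); the first says the transgression \( d_2\colon H^1(\lie n)^{\lie a}\to E_2^{2,0}=\Lambda^2\lie a^* \) is onto, equivalently the commutator map \( \tau\colon\Lambda^2\lie a\to\lie n/[\g,\lie n] \) is injective. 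The hard part is the ensuing tension: surjectivity of the transgression requires enough \( \lie a \)-invariant directions in \( \lie n \), and these same directions feed non-zero classes into \( E_2^{1,1}=H^1(\lie a;H^1(\lie n)) \). When \( m=2 \) one has \( E_2^{3,0}=\Lambda^3\lie a^*=0 \), so \( E_2^{1,1} \) admits no outgoing differential and survives, already forcing \( b_2(\g)\ge 1 \). For \( m\ge 3 \) such classes could be cancelled by \( d_2\colon E_2^{1,1}\to\Lambda^3\lie a^* \), and it is precisely here that \( b_3(\g)=0 \) must be invoked to control \( E_\infty^{3,0} \). I expect the decisive difficulty to be turning this mechanism into a dimension count valid uniformly in \( m \), showing that \( b_2(\g)=b_3(\g)=0 \) is incompatible with \( m\ge 2 \); this is the step I would spend the most effort on.
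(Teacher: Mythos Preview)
Your first three parts are sound; your indecomposability argument is even cleaner than the paper's, since once \( \g \) is known to be solvable each non-trivial direct factor has \( b_1 \ge 1 \), and K\"unneth immediately gives \( b_2(\g) \ge b_1(\g_1)b_1(\g_2) \ge 1 \).

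The genuine gap is the last part. Your spectral-sequence plan is left incomplete, and even the \( m=2 \) sketch is not closed: you need \( E_2^{1,1} = H^1(\lie a; H^1(\lie n)) \ne 0 \), and knowing only that \( H^1(\lie n)^{\lie a} \ne 0 \) does not give this without a further argument (an Euler-characteristic count would finish \( m=2 \), but for \( m \ge 3 \) you would then have to control several higher differentials simultaneously, as you yourself anticipate). The paper bypasses the spectral sequence entirely and proves \( b_1(\g)=1 \) by a short direct construction that uses only \( b_2(\g)=0 \): if \( e_1,e_2 \in Z^1(\g) \) are linearly independent, then \( e_1\wedge e_2 \) is a \( 2 \)-cocycle, so by \( b_2=0 \) there is \( e_3\in\g^* \) with \( de_3=e_1\wedge e_2 \); iterating produces \( e_4,\dots,e_n \) with \( de_j=e_1\wedge e_{j-1} \), and these necessarily form a basis of \( \g^* \). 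But then \( e_1\wedge e_n \) is closed yet visibly not in \( d(\g^*)=\langle e_1\wedge e_2,\dots,e_1\wedge e_{n-1}\rangle \), contradicting \( b_2(\g)=0 \). Notice that \( b_3(\g)=0 \) is never invoked in this step; it is used only earlier to obtain solvability. So the difficulty you flag as the heart of the proof dissolves once you abandon the spectral-sequence framing.
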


\begin{proof}
  To verify the first statement, we consider \( \lr \), the solvable
  radical of~\( \g \).  This is the maximal solvable ideal of \( \g \)
  and the quotient \( \g/\lr \) is semi-simple.  By
  \cite{Hochschild-Serre:Cohomology-Lie-algebras}, the cohomology of
  \( \g \) is given by
  \begin{equation*}
    H^k(\g) \cong \sum_{i+j=k} H^i(\g/\lr) \otimes H^j(\lr)^{\g},
  \end{equation*}
  where \( V^{\g} \) is the set of fixed points of the action \( \g \)
  on \( V \).  We thus have \( b_3(\g) \geqslant b_3(\g/\lr) \).  As
  any non-trivial semi-simple Lie algebra has non-trivial third
  cohomology group, we deduce that \( b_3(\g)=0 \) implies \( \g=\lr
  \), so that \( \g \) is solvable.  It is necessarily non-nilpotent
  since it is known \cite{Dixmier:nil-cohomology} that non-Abelian
  nilpotent Lie algebras are of dimension greater than two and have \(
  b_i\geqslant 2 \) for any \( 0 < i < \dim\g \), whereas the only
  non-Abelian three-dimensional nilpotent algebra has \( b_3(\g)=1 \).

  For the second statement of the theorem, suppose \( \g \) is a
  direct sum \( \h\oplus \lk \) of Lie algebras \( \h \) and \( \lk
  \).  Using the Künneth formula, we obtain
  \begin{gather*}
    b_2(\g) = b_2(\h)+b_2(\lk)+b_1(\h)b_1(\lk),\\
    b_3(\g) = b_3(\h)+b_3(\lk)+b_2(\h)b_1(\lk)+b_1(\h)b_2(\lk).
  \end{gather*}
  This immediately gives \( b_2(\h)=0=b_2(\lk) \) and \(
  b_3(\h)=0=b_3(\lk) \).  It also follows that either \( b_1(\h)=0 \)
  or \( b_1(\lk)=0 \).  Reordering the factors, we can assume that \(
  b_1 (\h)=0 \).  Thus \( \h \) has \( b_1(\h) = 0 = b_2(\h) \) and so
  is semi-simple.  But now the number of simple factors of \( \h \) is
  equal to \( b_3(\h) \) which is \( 0 \).  So \( \h = \{0\} \), and
  \( \g \) is not a non-trivial direct sum.

  Now we consider the last assertion of the theorem.  Note that \(
  b_1(\g) = \dim \g - \dim \g' \), where \( \g' = [\g,\g] \) is the
  derived algebra.  As \( \g \) is solvable, we get \( b_1(\g) > 0 \).
  Suppose \( b_1(\g) \geqslant 2 \).  Then there are two linearly
  independent elements \( e_1,e_2 \) in \( Z^1(\g) \).  As \( e_{12}
  := e_1 \wedge e_2 \in Z^2(\g) \) and \( b_2(\g)=0 \), we can find an
  element \( e_3 \) with \( de_3 = e_{12} \).  Note that we have \(
  \dim \Span{e_1,e_2,e_3} = 3 \).  Inductively, we may find \(
  e_4,\dots,e_n \) with \( de_j = e_{1,j-1}\) such that \(
  e_1,\dots,e_n \) is a basis for \( \g \).  But, now \( e_{1n} \in
  Z^2(\g) \) can not be exact, contradicting \( b_2(\g) = 0 \).  Thus,
  we must have \( b_1(\g) = 1 \).
\end{proof}

We will refine this result later, but it is already sufficient to list
the smallest examples of \( (2,3) \)-trivial Lie algebras.  In
dimension one, the only Lie algebra is Abelian and is automatically \(
(2,3) \)-trivial.  In dimension two a Lie algebra is either Abelian or
isomorphic to the \( (2,3) \)-trivial algebra \((0,21)\).  These first
two examples are uninteresting from the point of view of multi-moment
maps since they have \( \Pg=\{0\} \).  However, in dimensions three
and four we may use the known classification of solvable Lie algebras
\cite{Andrada-BDO:four} to obtain more interesting examples.  Note
that for any Lie algebra of dimension \( n \), we have
\begin{equation*}
  \dim\Pg = b_1(\g) + \tfrac12n(n-3),
\end{equation*}
since the kernel of left most map in \eqref{eq:exseq1} is \( H^1(\g) =
Z^1(\g) \).  Thus a \( (2,3) \)-trivial algebra has \( \dim\Pg =
(n-1)(n-2)/2 \), which is non-zero for \( n\geqslant3 \).

\begin{proposition}
  \label{prop:solb34}
  The inequivalent \( (2,3) \)-trivial Lie algebras in dimensions
  three and four are listed in the Tables \ref{tab:3sp} and
  \ref{tab:4sp}.
\end{proposition}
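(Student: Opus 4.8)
The plan is to turn the structural results of Theorem~\ref{thm:Liealg} into a finite search and then verify the cohomological conditions $b_2(\g)=0=b_3(\g)$ by an explicit computation on the Chevalley--Eilenberg complex of each surviving candidate.

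First I would extract the candidate list. By Theorem~\ref{thm:Liealg} any $(2,3)$-trivial algebra $\g$ of dimension $n\in\{3,4\}$ is solvable, non-nilpotent, indecomposable, and has derived algebra $\g'=[\g,\g]$ of codimension one. Since $\g$ is solvable its derived algebra is nilpotent (a standard consequence of Lie's theorem, after complexifying), so $\g=\bR e\ltimes_A\lie n$ as a vector space, where $\lie n=\g'$ is a nilpotent algebra of dimension $n-1$ and $A=\ad_e|_{\lie n}$ is a derivation; the requirement that $\g'$ be all of $\lie n$ forces $A$ to act invertibly on the abelianisation $\lie n/[\lie n,\lie n]$. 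For $n=3$ this leaves $\lie n=\bR^2$ and $A\in\lie{gl}(2,\bR)$ invertible; for $n=4$ it leaves $\lie n\in\{\bR^3,\lie h_3\}$ with the corresponding invertibility. Taking $A$ up to conjugation and nonzero rescaling of $e$ reproduces exactly the non-nilpotent indecomposable solvable algebras with codimension-one derived algebra in the classification \cite{Andrada-BDO:four}, which is the finite (up to families) list to be tested.

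Second, for each candidate I would write the structure equations in the notation of the $(0,21)$ example above and compute $d$ on $\Lambda^2\g^*$ and $\Lambda^3\g^*$. A useful organising remark is that a unimodular $n$-dimensional Lie algebra satisfies Poincaré duality $b_k=b_{n-k}$; since our candidates have $b_1=1$ (codimension-one derived algebra) this gives $b_3=b_0=1$ for $n=3$ and $b_3=b_1=1$ for $n=4$ in the unimodular case, so every $(2,3)$-trivial example here is necessarily non-unimodular, that is $\Tr A\ne0$. In dimension three this condition already suffices: a short count on the three-dimensional $\Lambda^2\g^*$ shows $b_2=0\iff b_3=0\iff\Tr A\ne0$, and normalising $A$ up to conjugacy and scaling yields the entries of Table~\ref{tab:3sp}, pairwise non-isomorphic because they are distinguished by the eigenvalue data of $A$. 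In dimension four the same computation, carried out for each real Jordan type of $A$ on $\bR^3$ and for each invertible derivation of $\lie h_3$, selects the entries of Table~\ref{tab:4sp}.

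The main obstacle is the treatment of the continuous families. Writing the eigenvalues of $A$ as $\lambda_i$, closed but non-exact two- and three-forms of the shape $e^{i}\wedge e^{j}$ appear precisely when the weights cancel, i.e. on the resonant loci $\lambda_i+\lambda_j=0$ (controlling $b_2$) and $\sum_i\lambda_i=0$ (controlling $b_3$); the $(2,3)$-trivial condition is therefore the complement of these hyperplanes together with invertibility of $A$. The care needed lies in enumerating these resonances over all real Jordan forms (including the complex-eigenvalue and non-diagonalisable cases) and over the Heisenberg extension, and then matching the resulting algebras to the entries of \cite{Andrada-BDO:four} up to isomorphism. The per-case cohomology computations themselves are routine finite-dimensional linear algebra once the structure constants are fixed.
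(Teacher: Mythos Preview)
Your proposal is correct and will produce the tables, but it takes a somewhat different route from the paper's own argument. The paper's sketch proof does not rely on the classification \cite{Andrada-BDO:four}; instead it first establishes Theorem~\ref{thm:Hkg} via the Hochschild--Serre spectral sequence, which says that \( \g=\bR A\ltimes\lk \) is \((2,3)\)-trivial if and only if \( H^i(\lk)^A=0 \) for \( i=1,2,3 \). This reduces the problem directly to the action of \( A \) on the cohomology of the nilpotent ideal: for \( \lk=\bR^{n-1} \) one needs no sum of one, two or three eigenvalues of \( A \) to vanish, and for \( \lk=\h_3 \) one checks invertibility of \( A \) on the explicit groups \( H^i(\h_3) \). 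The Jordan normal form of \( A \) up to scaling then \emph{produces} the tables without reference to a prior list.

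Your approach instead filters the existing classification by the constraints of Theorem~\ref{thm:Liealg} and then computes \( b_2,b_3 \) case by case on the full Chevalley--Eilenberg complex of~\( \g \). Your resonance observation (that nontrivial \( H^2 \) and \( H^3 \) appear exactly on the loci \( \lambda_i+\lambda_j=0 \) and \( \sum\lambda_i=0 \)) is precisely the content of Theorem~\ref{thm:Hkg} in the abelian-\( \lk \) case, so the two arguments converge there; for \( \lk=\h_3 \) you would still need to do an ad hoc computation, whereas the paper's criterion handles it uniformly. The trade-off is that the paper's method is self-contained and explains \emph{why} the parameter restrictions in Tables~\ref{tab:3sp} and~\ref{tab:4sp} take the form they do, while yours is more hands-on and relies on the correctness and completeness of the quoted classification.
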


\begin{table}[htp]
  \centering
  \begin{tabular}{LLL}
    \toprule
    \lr_3            & (0,21+31,31)                     &                   \\
    \lr_{3,\lambda}  & (0,21,\lambda.31)                & \lambda \in
    (-1,1]\setminus\{0\} \\
    \lr'_{3,\lambda} & (0,\lambda.21+31,-21+\lambda.31) & \lambda>0         \\
    \bottomrule
  \end{tabular}
  \caption{The inequivalent three-dimensional \((2,3)\)-trivial Lie algebras.}
  \label{tab:3sp}
\end{table}

\begin{table}[htp]
  \centering
  \begin{tabular}{LLL}
    \toprule
    \lr_4&(0,21+31,31+41,41)&\\
    \lr_{4,\lambda}&(0,21,\lambda.31+41,\lambda.41)&\lambda \ne -1,-\tfrac12,0\\ 
    \lr_{4,\mu,\lambda}&(0,21,\mu.31,\lambda.41)&(\mu,\lambda) \in \mathscr R\\
    \lr'_{4,\mu,\lambda}&(0,\mu.21,\lambda.31+41,-31+\lambda.41)&\mu>0,\ 
    \lambda \ne -\tfrac\mu2,0\\
    \ld_{4,\lambda}&(0,\lambda.21,(1-\lambda).31,41+32)&\lambda
    \geqslant \tfrac12,\ \lambda \ne 1,2\\
    \ld'_{4,\lambda}&(0,\lambda.21+31,-21+\lambda.31,2\lambda.41+32)&\lambda> 0\\
    \h_4&(0,21+31,31,2.41+32)&\\
    \bottomrule
  \end{tabular}
  \caption{The inequivalent four-dimensional \( (2,3) \)-trivial Lie
    algebras. The set \( \mathscr R \) consists of the \(
    \mu,\lambda\in (-1,1]\setminus\{0\} \) with \( \lambda\geqslant\mu
    \) and \( \mu+\lambda\ne 0,-1 \).}  
  \label{tab:4sp}
\end{table}

To explain the notation, consider the example \( \h_4 =
(0,21+31,31,2.41+32) \).  This means there is a basis \( e_1,\dots,e_4
\) for \( \h_4^* \) such that \( de_1=0 \), \( de_2 = e_{21}+e_{31}
\), \( de_3 = e_{31} \) and \( de_4 = 2e_{41} + e_{32} \).

We will sketch a proof of this Proposition that is independent of the
classification lists, using the following more detailed structure
result.  Full details of the classification and its extension to
five-dimensional algebras are given in \cite{Madsen-S:2-3-trivial}.

\begin{theorem}
  \label{thm:Hkg}
  A Lie algebra \( \g \) with derived algebra \( \lk=\g' \) is \(
  (2,3) \)-trivial if and only if \( \g \) is solvable, \( \lk \) is
  nilpotent of codimension \( 1 \) in~\( \g \) and \( H^1(\lk)^{\g} =
  \{0\} = H^2(\lk)^{\g} = H^3(\lk)^{\g} \).
\end{theorem}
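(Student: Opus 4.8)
The plan is to characterize $(2,3)$-triviality through the Hochschild–Serre spectral sequence applied to the ideal $\lk = \g'$, exploiting the codimension-one structure already extracted in Theorem~\ref{thm:Liealg}.

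=== BEGIN LATEX ===

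The plan is to prove Theorem~\ref{thm:Hkg} as a biconditional, treating the two directions separately and using the Hochschild--Serre machinery to reduce cohomology of \( \g \) to invariant cohomology of \( \lk \). First I would handle the forward direction. Assume \( \g \) is \( (2,3) \)-trivial. Since \( b_3(\g) = 0 \) and \( \g \ne \bR, \bR^2 \), Theorem~\ref{thm:Liealg} already tells us that \( \g \) is solvable, that \( \lk = \g' \) has codimension one, and that \( \lk \) is nilpotent (the non-nilpotency of \( \g \) together with \( b_1(\g) = 1 \) forces the complementary one-dimensional piece to act non-trivially, leaving \( \lk \) nilpotent). So the structural claims come for free, and the real content is the vanishing of the three invariant cohomology groups \( H^1(\lk)^\g, H^2(\lk)^\g, H^3(\lk)^\g \).

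The key tool is the Hochschild--Serre spectral sequence for the ideal \( \lk \lhd \g \) with one-dimensional quotient \( \g/\lk \cong \bR \). Because the quotient is one-dimensional, its own cohomology is concentrated in degrees \( 0 \) and \( 1 \), each one-dimensional, so the spectral sequence collapses dramatically: at the \( E_2 \) page one has \( E_2^{p,q} = H^p(\g/\lk) \otimes H^q(\lk)^\g \), which is non-zero only for \( p \in \{0,1\} \). I would write out the resulting long exact (Wang) sequence relating \( H^n(\g) \) to the invariants \( H^n(\lk)^\g \); concretely, for each \( n \) there is an exact piece
\begin{equation*}
  0 \to H^n(\lk)^\g \to H^n(\g) \to H^{n-1}(\lk)^\g \to 0
\end{equation*}
coming from the two-column spectral sequence. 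Applying this in degrees \( 2 \) and \( 3 \), the hypotheses \( b_2(\g) = 0 = b_3(\g) \) squeeze \( H^2(\lk)^\g, H^3(\lk)^\g \) and \( H^1(\lk)^\g \) (appearing as the \( p=1 \) contribution to \( H^2(\g) \)) all to zero. One must be slightly careful that the generator of \( H^1(\g/\lk) \) does not already account for cohomology of \( \g \): since \( b_1(\g) = 1 \), the class dual to the quotient direction exhausts \( H^1(\g) \), which pins down the edge maps and confirms that the \( p=1 \) columns in degrees \( 2,3 \) must vanish. This gives all three invariant vanishings.

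For the converse I would run the same Wang sequence in reverse: assuming \( \g \) solvable with \( \lk = \g' \) nilpotent of codimension one and the three invariant groups vanishing, the two-column exact sequence immediately yields \( H^2(\g) = 0 = H^3(\g) \), since every contributing term \( H^2(\lk)^\g, H^1(\lk)^\g, H^3(\lk)^\g \) is zero. The main obstacle, and the step I would spend the most care on, is justifying the collapse and identifying the \( E_2 \) terms as genuine \( \g \)-invariants \( H^q(\lk)^\g \) rather than merely \( \g/\lk \)-invariants; this requires checking that the action of the one-dimensional quotient on \( H^q(\lk) \) is the induced adjoint action and that \( \g \)-invariance coincides with \( \lk \)-invariance being automatic (invariance under \( \lk \) itself is free because inner derivations act trivially on cohomology). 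A secondary subtlety is the degenerate low-dimensional cases: one should confirm the excluded algebras \( \bR, \bR^2 \) are exactly those where the \( (2,3) \)-trivial hypothesis fails to force the codimension-one nilpotent structure, so that the stated equivalence is clean for the remaining algebras.

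=== END LATEX ===
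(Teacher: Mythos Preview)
Your approach is essentially the same as the paper's: both invoke Theorem~\ref{thm:Liealg} for the structural reduction and then run the Hochschild--Serre spectral sequence for the ideal \( \lk \lhd \g \) with one-dimensional quotient, so that only the columns \( p=0,1 \) survive and \( H^n(\g) \) splits into contributions from \( H^n(\lk) \) and \( H^{n-1}(\lk) \). One small correction: the \( E_2 \) page is \( H^p(\g/\lk, H^q(\lk)) \) with coefficients, not a tensor product with invariants, so the right-hand term of your short exact sequence is the coinvariants \( H^{n-1}(\lk)_{\g} \) rather than \( H^{n-1}(\lk)^{\g} \); the paper closes this gap by observing that for a finite-dimensional \( \bR A \)-module one has \( \ker(\ad_A) \cong \operatorname{coker}(\ad_A) \), so the distinction is immaterial for the vanishing statements.
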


\begin{proof}
  The derived algebra \( \lk=\g' \) of a solvable algebra \( \g \) is
  always nilpotent, so Theorem~\ref{thm:Liealg} implies that it only
  remains to check the assertions on the \( \g \)-invariant part of
  the cohomology of~\( \lk \).  For this, as \( \lk \) is an ideal
  of~\( \g \), we may use the spectral sequence of Hochschild \& Serre
  \cite{Hochschild-Serre:Cohomology-Lie-algebras} that has \(
  E_2^{j,i} \cong H^j(\g/\lk,H^i(\lk)) \).  Now the codimension one
  condition means that we may write \( \g/\lk = \bR A \) for some
  element~\( A \).  Note that \( H^i(\lk) \) is a \( \g/\lk \) module.
  For any \( \g/\lk \)-module \( M \), the cohomology groups \(
  H^j(\bR A,M) \) are defined from the chain groups \( C^j(\bR A,M) =
  \Lambda^j(\bR A)^*\otimes M = \Hom(\bR A,M) \).  These can only be
  non-zero for \( j=0,1 \) and in both cases they are isomorphic to~\(
  M \).  The chain map is \( d_\bR \) which on \( C^0 \) is \( (d_\bR
  f)(A)= A\cdot f \).  Thus \( E_2^{0,i} = \ker d_\bR = M^A \) and \(
  E_2^{1,1} = M/\img d_\bR \cong \ker d_\bR = M^A \).  We see that the
  \( E_2 \)-term of our spectral sequence is
  \begin{equation*}
    E_2^{j,i} \cong
    \begin{dcases*}
      H^i(\lk)^{\g}&for \( j=0,1 \),\\
      0&otherwise.
    \end{dcases*}
  \end{equation*}
  It follows that the spectral sequence degenerates at the \( E_2
  \)-term and we conclude that
  \begin{gather*}
    H^2(\g) \cong H^2(\lk)^{\g} + H^1(\lk)^{\g},\quad H^3(\g) \cong
    H^3(\lk)^{\g} + H^2(\lk)^{\g},
  \end{gather*}
  from which the result follows.
\end{proof}

\begin{proof}[Sketch proof of Proposition~\ref{prop:solb34}]
  Let \( \g \) be a \( (2,3) \)-trivial algebra of dimension three.
  Then \( \lk=\g' \) is nilpotent and two-dimensional, so \(
  \lk\cong\bR^2 \).  The element \( A \) of Theorem~\ref{thm:Hkg} acts
  on \( \bR^2 \) invertibly and the induced action on \( H^2(\bR^2)
  \cong \Lambda^2\bR^2 \cong \bR \) is also invertible.  So either \(
  A \) is diagonalisable over \( \bC \) with non-zero eigenvalues
  whose sum is non-zero, giving cases \( \lr_{3,\lambda} \) and \(
  \lr'_{3,\lambda} \), or \( A \) acts with Jordan form \( \left(
    \begin{smallmatrix} \lambda&1\\0&\lambda \end{smallmatrix} \right)
  \), \( \lambda\ne0 \), giving case \( \lr_3 \).  The particular
  structure coefficients are obtained by replacing \( A \) by a
  non-zero multiple.

  For \( \g \) of dimension four, we have \( \lk \cong \bR^3 \) or the
  Heisenberg algebra \( \h_3 = (0,0,12) \).  The former gives the
  algebras from the \( \lr \)-series when one enforces that no sum of
  one, two or three eigenvalues of \( A \) is zero.  The latter gives
  the remaining algebras; we have \( H^1(\h_3) \cong \Span{e_1,e_2}
  \), \( H^2(\h_3) \cong \Span{e_{13},e_{23}} \), \( H^3(\h_3) \cong
  \Span{e_{123}} \), \( A \) acts invertibly on these spaces and its
  action in \( e_3 \) is determined by its action on \( e_1 \) and~\(
  e_2 \).
\end{proof}

Theorem~\ref{thm:Hkg} enables us to generate many examples of \( (2,3)
\)-trivial Lie algebras in higher dimensions.  Say that a nilpotent
algebra \( \lk \) is \emph{positively graded} if there is a vector
space direct sum decomposition \( \lk = \lk_1+\dots+\lk_r \) with \(
[\lk_i,\lk_j] \subset \lk_{i+j} \) for all \( i,j \).

\begin{corollary}
  \label{cor:posgrnil}
  Let \( \lk \) be any positively graded nilpotent Lie algebra.  Then
  there is a \( (2,3) \)-trivial Lie algebra whose derived algebra is
  \( \lk \).
\end{corollary}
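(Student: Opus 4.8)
The plan is to realise \( \lk \) as the derived algebra of a one\bdash dimensional solvable extension built directly from the grading, and then verify the cohomological hypotheses of Theorem~\ref{thm:Hkg} by a weight argument. Concretely, I would exploit the positive grading \( \lk = \lk_1 + \dots + \lk_r \) to produce the canonical \emph{grading derivation} \( D\colon\lk\to\lk \) acting on \( \lk_i \) as multiplication by~\( i \). The relation \( [\lk_i,\lk_j]\subset\lk_{i+j} \) gives at once \( D[x,y] = [Dx,y] + [x,Dy] \), so \( D\in\Der(\lk) \). I then form the semidirect product \( \g = \bR A \ltimes_D \lk \), setting \( [A,x] = D(x) \) for \( x\in\lk \) and leaving the bracket of \( \lk \) unchanged.

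Next I would identify the derived algebra and collect the easy hypotheses of Theorem~\ref{thm:Hkg}. Since the grading is positive, every eigenvalue \( i\geqslant 1 \) of \( D \) on a nonzero summand is nonzero, so \( D \) is invertible and \( \img\ad_A = D(\lk) = \lk \). Together with \( [\lk,\lk]\subseteq\lk \) this yields \( \g' = [\g,\g] = \lk \). Hence \( \lk \) is a nilpotent ideal of codimension one and \( \g \) is solvable, its derived algebra \( \lk \) being nilpotent while \( \g/\lk\cong\bR \). By Theorem~\ref{thm:Hkg} it then remains only to show \( H^i(\lk)^{\g} = 0 \) for \( i = 1,2,3 \).

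The heart of the matter is this vanishing, which I would obtain from a weight decomposition. The element \( A \) acts on \( \lk^* \) through \( -D^* \), so \( \lk^* = \bigoplus_{i\geqslant 1}\lk_i^* \) is a decomposition into \( A \)-weight spaces, a covector supported on \( \lk_i^* \) having weight \( -i \). Because the bracket respects the grading, the Chevalley--Eilenberg differential preserves these weights, so the decomposition descends to \( H^\bullet(\lk) \). Any nonzero element of \( \Lambda^k\lk^* \) with \( k\geqslant 1 \) is a sum of wedges of at least \( k \) covectors each of weight \( \leqslant -1 \), hence has weight \( \leqslant -k < 0 \); the same bound holds on \( H^k(\lk) \). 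Since \( \lk \) acts trivially on its own cohomology, the \( \g \)-invariants coincide with the \( A \)-invariants, that is, with the weight\bdash zero part, which is therefore \( \{0\} \) for \( k = 1,2,3 \). Theorem~\ref{thm:Hkg} then shows \( \g \) is \( (2,3) \)-trivial with derived algebra \( \lk \), as required.

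I expect the main obstacle to be making the weight bookkeeping precise: one must confirm that the induced \( A \)-action on cochains is the Lie derivative, that it commutes with \( d \) so as to be well defined on cohomology, and that the grading on \( \lk^* \) interacts additively with the wedge product as claimed. Once the weight\bdash zero part of positive\bdash degree cohomology is seen to vanish, the remainder is a direct appeal to Theorem~\ref{thm:Hkg}.
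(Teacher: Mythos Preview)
Your proposal is correct and follows essentially the same route as the paper: form \( \g = \bR A \ltimes \lk \) with \( \ad_A \) acting by multiplication by~\( i \) on \( \lk_i \), then invoke Theorem~\ref{thm:Hkg}. The only difference is presentational: the paper observes the slightly stronger cochain\bdash level statement \( (\Lambda^s\lk)^{\g} = \{0\} \) for \( s\geqslant 1 \) (all \( A \)-weights on \( \Lambda^s\lk^* \) being strictly negative) and deduces the cohomological vanishing in one line, whereas you carry the weight argument through to cohomology explicitly; both are the same computation.
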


\begin{proof}
  Let \( \g = \Span A + \lk \) where \( \ad_A \) acts as
  multiplication by \( i \) on \( \lk_i \).  Then \( \g \) is a
  solvable algebra.  Moreover \( (\Lambda^s\lk)^{\g} = \{0\} \) for \(
  s\geqslant 1 \), so the cohomological condition of Theorem
  \ref{thm:Hkg} is satisfied and \( \g \) is as required.
\end{proof}

The algebras constructed in this way are completely solvable, meaning
that each \( \ad_{\Lel X} \), for \( \Lel X \in \g \), has only real
eigenvalues on \( \g \).

\begin{example}
  It may be checked directly that every nilpotent Lie algebra of
  dimension at most six can be positively graded.  The classification
  of these nilpotent algebras (see \cite{Salamon:complex-nil}) then
  gives over \( 30 \) different \( (2,3) \)-trivial algebras in
  dimension~\( 7 \), see~\cite{Madsen-S:2-3-trivial}.
\end{example}

\begin{example}
  Another class of positively graded algebras is given as follows.
  Let \( \Der(\lk) \) be the algebra of derivations of \( \lk \).  A
  \emph{maximal torus} \( \lt \) for \( \lk \) is a maximal Abelian
  subalgebra of the semi-simple elements of \( \Der(\lk) \).  The
  nilpotent Lie algebra \( \lk \) is said to have \emph{maximal rank}
  if \( \dim\lt = \dim(\lk/\lk') \).  Favre~\cite{Favre:poids} showed
  that there are only finitely many systems of weights for such
  algebras and following \cite{Santharoubane:Kac-Moody} a number of
  classification results have been obtained via Kac-Moody techniques,
  see \cite{Fernandez-Ternero:D4} and the references therein.  There
  is a large number (thousands) of families of such algebras.  From
  the general theory, one knows \cite[p.~83]{Favre:poids} that there
  is a positive grading of each maximal rank nilpotent Lie algebra \(
  \lk \).  This grading satisfies \( \sum_{i=s+1}^r \lk_i = \lk^{(s)}
  = [\lk,\lk^{(s-1)}] \).  Thus each of these distinct nilpotent
  algebras of maximal rank arises as the derived algebra of
  non-isomorphic \( (2,3) \)-trivial Lie algebras.
\end{example}

We note that in the construction of Corollary~\ref{cor:posgrnil}, \(
\ad_A \) is a semi-simple derivation of~\( \lk \).  Generally, if \(
\g \) is solvable, then \( A\in \g\setminus \g' \) acts on \( \lk=\g'
\) as a derivation.  For \( \g \) to be \( (2,3) \)-trivial,
Theorem~\ref{thm:Hkg} implies that this action is not nilpotent on \(
H^k(\lk) \) for \( k=1,2,3 \).  For \( \dim\g\geqslant 5 \), this
condition has most force since these three cohomology groups have
dimension at least~\( 2 \) \cite{Dixmier:nil-cohomology}.

Now a nilpotent Lie algebra \( \lk \) is said to be
\emph{characteristically nilpotent} if \( \Der(\lk) \) acts on \( \lk
\) by nilpotent endomorphisms.  It is known that this is equivalent to
\( \Der(\lk) \) being a nilpotent Lie algebra.  For a
characteristically nilpotent algebra \( \lk \), any solvable extension
will act nilpotently on the cohomology of \( \lk \).
Theorem~\ref{thm:Hkg} thus gives the following result.

\begin{corollary}
  If \( \lk \) is a characteristically nilpotent Lie algebra, then \(
  \lk \) is never the derived algebra of a \( (2,3) \)-trivial
  algebra.  \qed
\end{corollary}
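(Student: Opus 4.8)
The plan is to argue by contradiction, using Theorem~\ref{thm:Hkg} to reduce the claim to a statement about the action of a single derivation on the cohomology of \( \lk \). Suppose \( \lk \) were the derived algebra of some \( (2,3) \)-trivial Lie algebra \( \g \). Then Theorem~\ref{thm:Hkg} tells us that \( \g \) is solvable, that \( \lk = \g' \) has codimension one in \( \g \), and that the invariant cohomology groups vanish: \( H^k(\lk)^{\g} = \{0\} \) for \( k = 1,2,3 \). Writing \( \g = \bR A + \lk \), the element \( A \) acts on the ideal \( \lk \) by the derivation \( \ad_A \), and since \( \lk \) is characteristically nilpotent this derivation is a nilpotent endomorphism of \( \lk \). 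My goal is to contradict one of the vanishing conditions by exhibiting a nonzero \( A \)-invariant cohomology class.

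First I would identify the invariants concretely. As \( \lk \) is an ideal, \( \g \) acts on \( H^k(\lk) \); the elements of \( \lk \) act trivially on cohomology by Cartan's homotopy formula \( \Ld_X = d\,\iota_X + \iota_X\,d \), so the action descends to \( \g/\lk = \bR A \) and gives \( H^k(\lk)^{\g} = \ker\bigl(A \mid H^k(\lk)\bigr) \), exactly as in the proof of Theorem~\ref{thm:Hkg}. The central observation is then that the \( A \)-action on \( H^k(\lk) \) is nilpotent. Indeed, \( \ad_A \) induces the Lie derivative \( \Ld_A \) on the Chevalley--Eilenberg complex \( (\Lambda^\bullet\lk^*, d) \), a derivation commuting with \( d \) whose restriction to \( \lk^* \) is the nilpotent operator dual to \( \ad_A \); the derivation extension of a nilpotent operator stays nilpotent on each \( \Lambda^k\lk^* \), and since \( \Ld_A \) preserves cocycles and coboundaries it descends to a nilpotent operator on \( H^k(\lk) \).

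To conclude I would use that a nilpotent endomorphism of a nonzero vector space has nontrivial kernel. Because \( \lk \) is nilpotent and nonzero, its abelianisation \( \lk/[\lk,\lk] \) is nonzero, whence \( H^1(\lk) \cong (\lk/[\lk,\lk])^* \neq \{0\} \). The \( A \)-action on \( H^1(\lk) \) is nilpotent on a nonzero space, so \( H^1(\lk)^{\g} = \ker\bigl(A \mid H^1(\lk)\bigr) \neq \{0\} \), contradicting the condition \( H^1(\lk)^{\g} = \{0\} \) demanded by Theorem~\ref{thm:Hkg}. Hence \( \lk \) cannot occur as the derived algebra of a \( (2,3) \)-trivial Lie algebra.

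The main obstacle is the passage in the middle paragraph from nilpotency of \( \ad_A \) on \( \lk \) to nilpotency of the induced operator on \( H^k(\lk) \). This rests on two routine but essential points: that extending a nilpotent operator as a derivation of the exterior algebra preserves nilpotency, and that a nilpotent cochain map commuting with the differential induces a nilpotent map on \( Z^k/B^k \). Once these are in place the argument is immediate, since only the single class \( k=1 \) together with the elementary kernel fact is needed; the stronger vanishing of \( H^2(\lk)^{\g} \) and \( H^3(\lk)^{\g} \) is not even required.
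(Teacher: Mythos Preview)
Your argument is correct and follows exactly the route the paper takes: the paper's proof is the one-sentence remark preceding the corollary, namely that for characteristically nilpotent \( \lk \) any derivation acts nilpotently and hence nilpotently on \( H^k(\lk) \), so Theorem~\ref{thm:Hkg} fails; you have simply unpacked this by making explicit the identification \( H^k(\lk)^{\g} = \ker(A\mid H^k(\lk)) \), the passage from nilpotency on \( \lk \) to nilpotency on cohomology, and the choice \( k=1 \) with \( H^1(\lk)\ne 0 \).
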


\begin{example}
  The first example of a characteristically nilpotent Lie algebra was
  constructed by Dixmier and Lister \cite{Dixmier-L:derivations} in
  dimension eight.  However, there are seven-dimensional examples with
  the same property and even continuous families
  \cite{Goze-K:nilpotent} including:
  \begin{equation*}
    (0,0,12,13,23,14+25+\alpha.23,16+25+35+\alpha.24),\qquad \alpha\ne0.
  \end{equation*}
  Thus no member of this family of algebras can occur as the derived
  algebra of any \( (2,3) \)-trivial Lie algebra.
\end{example}

A Lie algebra \( \g \) is called \emph{unimodular} if the Lie algebra
homomorphism \( \chi\colon\g\to\bR \) given by \( \chi(x) =
\Tr(\ad(x)) \) has trivial image.  Such Lie algebras are interesting
since unimodularity is a necessary condition for the existence of a
co-compact discrete subgroup~\cite{Milnor:left}.

\begin{corollary}
  \label{cor:solb34}
  The simply-connected \( (2,3) \)-trivial Lie groups of dimension
  four or below are not unimodular.  In particular they do not admit a
  compact quotient by a lattice.
\end{corollary}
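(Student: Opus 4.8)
The plan is to reduce unimodularity to the vanishing of the trace of a single derivation, and then to extract that trace from the top-degree cohomology of the derived algebra. Since a simply-connected Lie group is unimodular exactly when its Lie algebra is --- that is, when \( \chi(x) = \Tr(\ad x) \) vanishes identically --- I would work entirely with \( \g \). By Theorem~\ref{thm:Liealg}, any \( (2,3) \)-trivial \( \g \) other than the one-dimensional Abelian algebra \( \bR \) is solvable with derived algebra \( \lk = \g' \) nilpotent and of codimension one. I would therefore fix a splitting \( \g = \bR A \oplus \lk \), set \( D = \ad_A|_{\lk} \in \Der(\lk) \), and write \( m = \dim\lk = \dim\g - 1 \), so that \( m \in \{1,2,3\} \) in the range of interest.

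First I would observe that \( \g \) is unimodular if and only if \( \Tr(D) = 0 \). The homomorphism \( \chi \) takes values in the Abelian \( \bR \), so it annihilates \( \g' = \lk \) and is determined by \( \chi(A) \); and since \( \ad_A \) kills \( A \) and preserves the ideal \( \lk \), it is block-diagonal with \( \Tr(\ad_A) = \Tr(D) \).

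The key step is to look at the top-degree cohomology \( H^m(\lk) \). As \( \lk \) is nilpotent, it is unimodular, so the coboundary \( d\colon \Lambda^{m-1}\lk^* \to \Lambda^m\lk^* \) vanishes and \( H^m(\lk) = \Lambda^m\lk^* \cong \bR \) is one-dimensional. The \( \g \)-action on cohomology factors through \( \g/\lk = \bR A \), and on this top line \( A \) acts by the Lie derivative \( \Ld_A \), hence by the scalar \( -\Tr(D) \). Because \( m \leqslant 3 \), the vanishing \( H^m(\lk)^{\g} = 0 \) is one of the hypotheses guaranteed by Theorem~\ref{thm:Hkg}; as the invariants are the kernel of multiplication by \( -\Tr(D) \) on a line, this forces \( \Tr(D) \neq 0 \). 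Thus \( \g \) fails to be unimodular, and the non-existence of a lattice is then immediate from Milnor's criterion~\cite{Milnor:left}.

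The step I expect to carry the weight is the identification of the \( A \)-action on \( H^m(\lk) \) with multiplication by \( -\Tr(D) \), which relies on the standard fact that for a unimodular algebra the top coboundary vanishes, so that \( H^m(\lk) \) is genuinely all of \( \Lambda^m\lk^* \) and the Lie derivative there is minus the trace. This is also precisely where the dimension bound is used: the top-degree condition \( H^m(\lk)^{\g} = 0 \) appears among the hypotheses of Theorem~\ref{thm:Hkg} only while \( m \leqslant 3 \), i.e.\ \( \dim\g \leqslant 4 \), which is why unimodular \( (2,3) \)-trivial algebras can and do appear once \( \dim\g \geqslant 5 \).
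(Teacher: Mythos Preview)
Your argument is correct, but it takes a different route from the paper's. The paper argues directly with the cohomology of \( \g \): unimodularity is equivalent to \( b_n(\g)=1 \), and unimodular Lie algebras satisfy Poincar\'e duality \( b_k(\g)=b_{n-k}(\g) \); for \( n=3 \) the condition \( b_3(\g)=0 \) kills unimodularity outright, and for \( n=4 \) duality would force \( b_1(\g)=b_3(\g)=0 \), contradicting \( b_1(\g)=1 \). You instead work one level down, invoking Theorem~\ref{thm:Hkg} to get \( H^m(\lk)^{\g}=0 \) for \( m=\dim\lk\leqslant 3 \), and then identify the \( A \)-action on the line \( H^m(\lk)=\Lambda^m\lk^* \) with multiplication by \( -\Tr(D) \). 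In effect you are unpacking the criterion \( b_n(\g)=1 \) through the Hochschild--Serre spectral sequence used in the proof of Theorem~\ref{thm:Hkg}, since that gives \( H^n(\g)\cong H^m(\lk)^{\g} \). The paper's version is shorter and treats dimensions three and four by slightly different tricks, at the cost of quoting Poincar\'e duality; your version is uniform across \( 2\leqslant\dim\g\leqslant 4 \), avoids duality, and makes transparent exactly which hypothesis of Theorem~\ref{thm:Hkg} carries the weight --- which also explains cleanly why the bound \( \dim\g\leqslant 4 \) is sharp.
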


\begin{proof}
  An \( n \)-dimensional Lie algebra \( \g \) is unimodular if and
  only if \( b_n(\g) = 1 \).  Moreover, one may show that unimodular
  algebras satisfy Hodge duality \( b_k(\g) = b_{n-k}(\g) \).  For \(
  \g \) a \( (2,3) \)-trivial Lie algebra of dimension three, we have
  \( b_3(\g) = 0 \), so \( \g \) is not unimodular.  For \( \g \) of
  dimension four, unimodularity implies \( b_1(\g) = b_3(\g) = 0 \).
  But \( (2,3) \)-trivial algebras have \( b_1(\g) = 1 \), so they can
  not be unimodular in dimension four.
\end{proof}

\begin{example}
  It can be shown that in dimension five and above there are
  unimodular \((2,3)\)-trivial Lie algebras,
  see~\cite{Madsen-S:2-3-trivial}.  Moreover one may verify that there
  are solvmanifolds of the form \( G/\Gamma \), where \( G \) is
  \((2,3) \)-trivial.  Indeed using \cite[Proposition
  7.2.1(i)]{Bock:lowsolv} one may see that there are \( (2,3)
  \)-trivial Lie groups which admit a lattice.  One such example has
  Lie algebra
  \begin{equation*}
    (0,\lambda_1.12,\lambda_2.13,\lambda_3.14,\lambda_4.15),
  \end{equation*}
  where \( \exp(\lambda_i) \approx 0.1277, 0.6297, 2.797, 4.446 \) are
  the four roots of the polynomial \( s^4-8s^3+18s^2-10s+1 \).  As
  this Lie algebra is completely solvable it follows from Hattori's
  Theorem \cite{Hattori:spectral} that one has an isomorphism \(
  H^*_{\text{dR}}\left(G/\Gamma\right)\cong H^*(\g) \).  In particular
  the five-dimensional solvmanifold constructed in this way has
  vanishing second and third de Rham cohomology groups.
\end{example}

\section{Examples and applications}
\label{sec:exaplc}

As strong geometry has no analogue of the Darboux Theorem, the theory
of multi-moment maps is in some senses less rigid than that for
symplectic moment maps and there is a wider variety of types of
example.
	 
\subsection{Second exterior power of the cotangent bundle}
\label{sec:exscdext}

In symplectic geometry one of the fundamental examples is provided by
the cotangent bundle of a manifold, which in mechanics may be
interpreted as a phase space.  In strong geometry, an analogous
example is provided by the second exterior power \( M = \Lambda^2T^*N
\) of a base manifold~\( N \).  This carries a canonical two-form \( b
\), given by
\begin{equation*}
  b_\alpha(W_1,W_2) = \alpha(\pi_*W_1,\pi_*W_2),\qquad\text{\(
    W_1,W_2\in T_\alpha M \),} 
\end{equation*}
where \( \pi\colon \Lambda^2T^*N \to N \) is the bundle projection.
From this one defines a closed three-form \( c \) on~\( M \), via
\begin{equation*}
  c = db.
\end{equation*}
This form is 2-plectic: in local coordinates \( (q^1,\dots,q^n) \) on
\( N \) we have \( \alpha = \sum_{i<j} p_{ij} dq^i\wedge dq^j \)
defining local coordinates \( (q^i,p_{ij}) \) on \( M = \Lambda^2T^*N
\) in which \( c = \sum_{i<j} dp_{ij} \wedge dq^i\wedge dq^j \).  This
is the fundamental example in
\cite{Baez-HR:string,Carinena-CI:multisymplectic}.

If \( G \) is a group of diffeomorphisms of \( N \), then there is an
induced action on \( M = \Lambda^2T^*N \) which preserves \( b \) and
hence \( c \).  As \( c = db \), Proposition~\ref{prop:exact} gives
that there is a multi-moment map \( \nu \) determined
by~\eqref{eq:exmmmap}, which here reads
\begin{equation*}
  \inp{\nu(\alpha)}{\Lel p} = \alpha(p_N)
\end{equation*}
where \( p_N \) is the field of bivectors on \( N \) determined by \(
\Lel p \in \Pg \).  To summarise

\begin{proposition}
  If a Lie group \( G \) acts on a smooth manifold \( N \), then the
  induced action on \( M = \Lambda^2T^*N \) admits a multi-moment map
  with respect to the canonical 2-plectic structure.  \qed
\end{proposition}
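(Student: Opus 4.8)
The plan is to deduce the statement directly from Proposition~\ref{prop:exact}: the canonical two-form $b$ is a primitive of $c$ by construction, so the entire content of the Proposition is the assertion that the lifted action preserves $b$. Once that invariance is in place, there is nothing further to do.

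First I would write down the induced action explicitly. A diffeomorphism $g$ of $N$ lifts to a bundle automorphism $\hat g$ of $M = \Lambda^2 T^*N$ covering $g$, defined on fibres by pulling back along $g^{-1}$, so that $\hat g(\alpha) = (g^{-1})^*\alpha \in \Lambda^2 T^*_{g(x)}N$ for $\alpha \in \Lambda^2 T^*_x N$. The contravariance of pullback makes $g \mapsto \hat g$ a left action, and by design $\pi \circ \hat g = g \circ \pi$.

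The substantive step, and the only place where any real computation is required, is to verify $\hat g^* b = b$. This is a naturality property of the tautological construction: $b$ is built solely from the pairing of $\alpha$ with $\pi_*$, and both ingredients transform compatibly under $\hat g$. Explicitly, for $W_1, W_2 \in T_\alpha M$ I would compute $(\hat g^* b)_\alpha(W_1, W_2) = b_{\hat g(\alpha)}(\hat g_* W_1, \hat g_* W_2) = \hat g(\alpha)(\pi_* \hat g_* W_1, \pi_* \hat g_* W_2)$, then substitute $\pi_* \hat g_* = g_* \pi_*$ and $\hat g(\alpha) = (g^{-1})^*\alpha$. The two occurrences of $g_*$ are absorbed by the pullback, since $((g^{-1})^*\alpha)(g_* v) = \alpha(v)$, leaving exactly $\alpha(\pi_* W_1, \pi_* W_2) = b_\alpha(W_1, W_2)$. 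Differentiating the one-parameter subgroups gives $\Ld_X b = 0$ for every fundamental vector field $X$, so $b$ is $G$-invariant.

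With the invariance established, Proposition~\ref{prop:exact} applies directly and yields the multi-moment map $\inp{\nu}{\Lel p} = b(p)$. Unwinding this formula recovers the stated expression: at $\alpha \in M$, the bivector field generated by $\Lel p \in \Pg$ satisfies $\pi_* p = p_N$, whence $\inp{\nu(\alpha)}{\Lel p} = b_\alpha(p) = \alpha(\pi_* p) = \alpha(p_N)$. I expect the invariance check to be the only genuine obstacle, and it is conceptual rather than technical: once one recognises $b$ as tautological the cancellation of the pullback factors is forced, but the bookkeeping of the differentials $\pi_*$, $g_*$ and $\hat g_*$ must be tracked carefully to make that cancellation transparent.
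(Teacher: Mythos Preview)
Your proposal is correct and follows exactly the paper's approach: the paper simply asserts that the induced action preserves $b$ and then invokes Proposition~\ref{prop:exact}, deriving the formula $\inp{\nu(\alpha)}{\Lel p} = \alpha(p_N)$ in the text preceding the Proposition. You have filled in the naturality computation $\hat g^*b = b$ that the paper leaves implicit, but the logical skeleton is identical.
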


\begin{remark}
  Suppose \( N^n \) carries an \( H \)-structure, i.e., a reduction of
  the structure group of \( N \) to \( H\leqslant \GL(n,\bR) \).  Then
  at each point of \( q\in N \) we have a canonical decomposition \(
  \Lambda^2_qT^*N = \oplus_iV_i(q) \) into isotypical \( H \)-modules.
  If the action of \( G \) preserves the \( H \)-structure then the
  induced map of \( \Lambda^2T^*N \) preserves the subbundles \( V_i
  \).  Each bundle \( V_i \) carries a strong geometry via the
  pull-back of \( c \) on \( M=\Lambda^2T^*N \), and the action of \(
  G \) again admits a multi-moment map.  For example, if \( N \) is an
  oriented four-manifold and \( G \) preserves the orientation, then
  there are multi-moment maps \( \nu_\pm \) defined on the 2-plectic
  seven-manifolds \( \Lambda^2_\pm \).  The particular case of \(
  \SO(4)=\Sp(1)_+\Sp(1)_- \) acting on \( N=\bR^4=\bH \) via \(
  (A,B)\cdot q = Aq\overline B \) has multi-moment map on \(
  \Lambda^2_+N \cong \bH + \im\bH \) given by \(
  \inp{\nu_+(q,p)}{a\otimes b} = \tfrac12\re(paqb\overline q) \), for
  \( q\in\bH \), \( p\in\im\bH \), \( a\otimes b \in
  \sP(1)_+\otimes\sP(1)_- = \im\bH\otimes\im\bH \cong \Pg[\sP(1)_+ +
  \sP(1)_-]\).
\end{remark}

\subsection{Homogeneous strong geometries}
\label{sec:orbits}

If \( G \) acts transitively on a strong manifold \( M \), then we may
define \( \Psi\colon M \to Z^3(\g) \) via~\eqref{eq:Psi}, and the
image will be a \( G \)-orbit in \( Z^3(\g) \).  Conversely,
formula~\eqref{eq:Psi} can be used to define strong geometries that
map to a given orbit in \( Z^3(\g) \): given \( \Psi \in Z^3(\g) \),
let be \( K_\Psi \) denote the connected subgroup generated by \(
\ker\Psi = \{\, \Lel X \in \g : \Lel X \hook \Psi = 0 \,\} \); for any
closed group \( H \) of~\( G \) with \( H\subset K_\Psi \),
equation~\eqref{eq:Psi} defines a closed three-form \( c \) on the
homogeneous space \( G/H \) and this strong geometry maps to \(
G\cdot\Psi \subset Z^3(\g) \).

Now suppose that \( \Psi = \dP\beta \) for some \( \beta\in\Pg^* \).
If the map \( \dP \) is injective, then the orbits \( G\cdot\Psi \)
and \( G\cdot\beta \) are identified and the map \( \Psi\colon M\to
Z^3(\g) \) may now be interpreted as a map \( \nu\colon M\to \Pg^* \).
Injectivity of \( \dP \) is guaranteed by the condition \( b_2(\g) = 0
\).  When this holds, the proof of Theorem~\ref{thm:algmmpexc} shows
that \( \nu \) is a multi-moment map for the action of~\( G \).

\begin{theorem}
  \label{thm:orb1}
  Suppose \( G \) is a connected Lie group with \( b_2(\g)=0 \).  Let
  \( \mathcal O = G \cdot \beta \subset \Pg^* \) be an orbit of \( G
  \) acting on the dual of the Lie kernel.  Then there are homogeneous
  strong manifolds \( (G/H,c) \), with \( c \) corresponding to \(
  \Psi = \dP\beta \), such that \( \mathcal O \) is the image of \(
  G/H \) under the (unique) multi-moment map~\( \nu \).

  The strong geometry may be realised on the orbit \( \mathcal O \)
  itself if and only if
  \begin{equation}
    \label{eq:stabeqker}
    \stab_{\g}\beta = \ker(\dP\beta).
  \end{equation}
  In this situation, the orbit is 2-plectic and \( \nu \) is simply
  the inclusion \( \mathcal O \hookrightarrow \Pg^* \).
\end{theorem}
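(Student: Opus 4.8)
The plan is to construct the three-form directly from the point $\Psi = \dP\beta \in Z^3(\g)$ by the homogeneous recipe sketched before the theorem, and then to determine exactly when the orbit $\mathcal O = G/\stab_G\beta$ itself supports this geometry.

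First I would record the algebraic properties of $\ker\Psi$. Using the Cartan-type identity $\Ld_{\Lel X} = d\circ(\Lel X\hook) + (\Lel X\hook)\circ d$ on $\Lambda^*\g^*$ together with $d\Psi = 0$, one gets $\Ld_{\Lel X}\Psi = d(\Lel X\hook\Psi)$; hence $\ker\Psi$ is a subalgebra and each of its elements annihilates $\Psi$ under $\Ld$, so the connected subgroup $K_\Psi$ fixes $\Psi$. For any closed $H\subset K_\Psi$ the form $\Psi$ is then horizontal (since $\h\subset\ker\Psi$) and $\Ad(H)$-invariant, so it descends to a $G$-invariant closed three-form $c$ on $G/H$, and the associated map $G/H\to Z^3(\g)$ of \eqref{eq:Psi} has image the orbit $G\cdot\Psi = \dP(\mathcal O)$. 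Because $b_2(\g)=0$ makes $\dP$ injective (Proposition~\ref{prop:dmap}), the composite $\nu := \dP^{-1}\circ\Psi\colon G/H\to\Pg^*$ is a well-defined $G$-equivariant map onto $\mathcal O$, and the computation in the proof of Theorem~\ref{thm:algmmpexc} — which needs only $b_2(\g)=0$ once $\Psi$ is known to take values in $\img\dP$ — shows $\nu$ is the unique multi-moment map. This proves the first assertion, already for $H=\{e\}$.

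For the second assertion I would write $\mathcal O = G/\stab_G\beta$ and ask when $\Psi$ descends there. Invariance is automatic: $\stab_G\beta$ fixes $\beta$, hence fixes $\dP\beta = \Psi$ by equivariance of $\dP$. Thus realisation on $\mathcal O$ reduces to horizontality, $\stab_\g\beta\subset\ker(\dP\beta)$. The step carrying the weight is the reverse, unconditional inclusion: if $\Lel X\hook\Psi = 0$ then $\Ld_{\Lel X}\Psi = 0$, so $\dP(\Lel X\cdot\beta) = \Lel X\cdot(\dP\beta) = 0$, whence $\Lel X\cdot\beta = 0$ by injectivity of $\dP$; therefore $\ker(\dP\beta)\subset\stab_\g\beta$ always. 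Combining the two inclusions shows that $\Psi$ descends to $\mathcal O$ precisely when \eqref{eq:stabeqker} holds.

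Finally, assuming \eqref{eq:stabeqker}, I would verify the two remaining claims. The multi-moment map sends $g\cdot\beta\mapsto\dP^{-1}(g\cdot\Psi) = \dP^{-1}(\dP(g\cdot\beta)) = g\cdot\beta$, so $\nu$ is the inclusion $\mathcal O\hookrightarrow\Pg^*$. For the 2-plectic property it suffices by homogeneity to argue at $\beta$, where $T_\beta\mathcal O\cong\g/\stab_\g\beta$ and $c$ is represented by $\Psi$: a tangent vector $X = \Lel X\cdot\beta$ satisfies $X\hook c = 0$ iff $\Lel X\hook\Psi = 0$ iff $\Lel X\in\ker(\dP\beta) = \stab_\g\beta$, i.e.\ iff $X = 0$. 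The only genuinely delicate point throughout is the bridge between the kernel condition $\Lel X\hook\Psi = 0$ and the stabiliser condition $\Ld_{\Lel X}\Psi = 0$, which is supplied exactly by the injectivity of $\dP$ coming from $b_2(\g)=0$.
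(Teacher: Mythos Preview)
Your argument is correct and follows essentially the same route as the paper: build the closed three-form on $G/H$ from $\Psi=\dP\beta$ via the homogeneous recipe preceding the theorem, invoke the computation from Theorem~\ref{thm:algmmpexc} (which, as you note, only needs $b_2(\g)=0$ once $\Psi$ lies in $\img\dP$) to see that $\nu=\dP^{-1}\circ\Psi$ is the unique multi-moment map with image $\mathcal O$, and then analyse when $\Psi$ descends to $\mathcal O=G/\stab_G\beta$.

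One small methodological difference is worth recording. For the unconditional inclusion $\ker(\dP\beta)\subset\stab_\g\beta$ you argue: $\Lel X\hook\Psi=0\Rightarrow\Ld_{\Lel X}\Psi=0\Rightarrow\dP(\Lel X\cdot\beta)=0\Rightarrow\Lel X\cdot\beta=0$, using injectivity of $\dP$ (hence $b_2(\g)=0$) at the last step. The paper instead reads this inclusion off directly from equation~\eqref{eq:adjoint}, which gives $(\dP\beta)(\Lel X\wedge\Lel p)=-\beta(\ad_{\Lel X}\Lel p)$ for all $\Lel p\in\Pg$; thus $\Lel X\hook\dP\beta=0$ already forces $\Lel X\cdot\beta=0$ without invoking $b_2(\g)=0$. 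Both routes are valid here, but the paper's is slightly sharper in that this particular inclusion holds for arbitrary $\g$. Your treatment of the 2-plectic property and of $\nu$ being the inclusion is more explicit than the paper's terse ``the result now follows'', and is correct as written.
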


\begin{proof}
  It only remains to prove the assertions of the last paragraph of the
  Theorem.  We have \( \mathcal O = G/K \) with \( K = \stab_G\beta
  \), a closed subgroup of~\( G \).  Now equation~\eqref{eq:adjoint},
  shows that \( K \) has Lie algebra \( \ker(\dP\beta) \), so the
  component of the identity \( K^0 \) of \( K \) is \( K^0 = K_\Psi \)
  for \( \Psi = \dP\beta \).  In particular, \( \Psi \) vanishes on
  elements of \( \lk \) and induces a well-defined form on \(
  T_\beta\mathcal O = \g/\lk \).  The result now follows.
\end{proof}

\begin{example}
  Suppose \( G \) is a \( (2,3) \)-trivial Lie group.  Then, taking \(
  H=\{e\} \), we see that every \( \Psi \in Z^3(\g) \) gives rise to a
  strong geometry on \( G \) with multi-moment map whose image is
  diffeomorphic to the \( G \)-orbit of \( \Psi \).
\end{example}

\begin{example}
  For \( G \) a \( (2,3) \)-trivial group of dimension at most four,
  the Lie kernel contains strong orbits exactly when \( \g' = \h_3 \).
  In this case, \( \Pg \) has dimension~\( 3 \), the orbits are open
  and the strong structure is a left-invariant volume form.
\end{example}

\begin{example}
  Consider \( G = \Un(2) \cong (S^1\times \SU(2))/\{\pm(1,1)\} \).  We
  have \( \Pg[\un(2)] = \Lel T\wedge \su(2) \), where \( \Lel T \)
  generates the Lie algebra of \( S^1 \).  The orbits of \(
  \Pg[\un(2)] \) are thus two-dimensional and can not admit
  (non-trivial) strong geometries.  On the other hand, suppose we
  write \( e_1,e_2,e_3 \) for a standard basis of \( \su(2)^* \) with
  \( de_1 = -e_{23} \).  Then the element \( \beta = dt\wedge e_1 \in
  \Pg[\un(2)]^* \), has \( \dP\beta = - dt \wedge e_{23} \), defining
  \( \Psi \in Z^3(\un(2)) \).  This \( \beta \) does not satisfy
  condition~\eqref{eq:stabeqker} even though \( \dP \) identifies the
  orbits of \( \beta \) and \( \Psi \).  However, \( \Psi \)~defines
  strong geometries on \( \Un(2) \) and on \(
  \Un(2)/\diag(e^{i\theta},e^{-i\theta}) \cong S^1 \times S^2 \) with
  multi-moment map the projection to \( S^2 \).  Note that \(
  \nu\colon \Un(2)\to S^2 \) is essentially the Hopf fibration.
\end{example}

\begin{example}
  Consider \( \g = \su(3) \) as a Lie algebra of complex matrices.
  Write \( E_{pq} \) for the elementary \( 3\times 3 \)-matrix with \(
  1 \) at position \( (p,q) \).  Then \( \su(3) \) has a basis \( A_j
  = i(E_{jj}-E_{j+1,j+1}) \), \( B_{k\ell} = E_{k\ell} - E_{\ell k}
  \), \( C_{k\ell} = i(E_{k\ell} + E_{\ell k}) \), for \( j,k=1,2 \),
  \( k<\ell = 2,3 \).  Let \( a_1,a_2,b_{12},\dots,c_{23} \) denote
  the dual basis.

  The element \( \beta_1 = b_{12} \wedge b_{13} - c_{12} \wedge c_{13}
  \) lies in \( \Pg[\su(3)]^* \).  One has \( \dP\beta_1 =
  3a_1(b_{12}c_{13}-b_{13}c_{12}) \), where we have omitted wedge
  signs.  Direct calculation shows that \( \ker\dP\beta_1 =
  \Span{A_2,B_{23},C_{23}} = \stab_{\su(3)}\beta_1 \).  Thus, by
  Theorem~\ref{thm:orb1}, the \( \SU(3) \)-orbit \( \mathcal O_1 \) of
  \( \beta_1 \) is 2-plectic with multi-moment map given by the
  inclusion in \( \Pg[\su(3)]^* \).  As the above stabiliser is
  isomorphic to \( \su(2) \), we see that up to finite covers \(
  \mathcal O_1 \) is \( \SU(3)/\SU(2) = S^5 \).

  Similarly, one may realise \( F_{1,2}(\bC^3) = \SU(3)/T^2 \) as a
  2-plectic manifold by considering the orbit of \( \beta_2 =
  c_{12}b_{12} + b_{13}c_{13} + c_{23}b_{23} \in \Pg[\su(3)]^* \).

  It is interesting to note that \( F_{1,2}(\bC^3) \) carries a nearly
  Kähler structure.  Such a geometry may be specified by a two-form \(
  \sigma \) and a three-form \( \psi_+ \) whose pointwise stabiliser
  in \( \GL(6,\bR) \) is isomorphic to \( \SU(3) \).  The nearly
  Kähler condition is then \( d\sigma = \psi_+ \), \( d\psi_- =
  -\tfrac12\sigma^2 \), where \( \psi_++i\psi_- \in \Lambda^{3,0} \).
  Direct check shows that each homogeneous strict nearly Kähler
  six-manifold \( G/H = F_{1,2}(\bC^3) \), \( \CP(3) \), \( S^3\times
  S^3 \) and \( S^6 \), as classified by
  Butruille~\cite{Butruille:nK}, may be realised as a 2-plectic orbit
  \( G\cdot\beta \) in~\( \Pg^* \).  Moreover this may done in such a
  way that \( \Psi = \dP\beta \) induces \( c = \psi_+ \) via
  \eqref{eq:Psi} and \( \beta \) induces \( \sigma \) in a
  corresponding way.  Further details may be found
  in~\cite{Madsen-S:2-3-trivial}.
\end{example}

To characterise the homogeneous geometries of Theorem~\ref{thm:orb1},
we introduce the following terminology.

\begin{definition}
  Let \( G \) be a group of symmetries of a strong geometry \( (M,c)
  \).  We say that the action is \emph{weakly \( \Pg \)-transitive} if
  \( G \) acts transitively on~\( M \) and for each non-zero \( X \in
  T_xM \), there is a \( \Lel p\in\Pg \) such that \( c(X\wedge p) \)
  is non-zero.
\end{definition}

\begin{corollary}
  If \( G \) is \( (2,3) \)-trivial, then the weakly \( \Pg
  \)-transitive 2-plectic geometries with symmetry group \( G \) are
  discrete covers of orbits \( \mathcal O = G\cdot\beta \) in~\( \Pg^*
  \) satisfying condition~\eqref{eq:stabeqker}.

  More generally, if \( G \) is a Lie group with \( b_2(\g) = 0 \),
  then the orbits \( \mathcal O = G\cdot\beta \subset \Pg^* \)
  satisfying~\eqref{eq:stabeqker} are, up to discrete covers, the
  weakly \( \Pg \)-transitive 2-plectic geometries that admit a
  multi-moment map.
\end{corollary}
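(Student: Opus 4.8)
The plan is to prove the corollary by setting up a correspondence, valid up to discrete covers, between weakly \( \Pg \)-transitive 2-plectic geometries with symmetry group \( G \) and orbits \( \mathcal O = G\cdot\beta \subset \Pg^* \) satisfying~\eqref{eq:stabeqker}. The reverse implication is essentially contained in Theorem~\ref{thm:orb1}, so the substance lies in the forward implication, where I would use the multi-moment map to realise an abstract geometry as a discrete cover of such an orbit. Throughout I would treat the \( (2,3) \)-trivial case and the weaker \( b_2(\g)=0 \) case in parallel, the only difference being whether the multi-moment map is automatic or merely assumed.

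For the forward direction, suppose \( (M,c) \) is weakly \( \Pg \)-transitive and 2-plectic. Transitivity gives \( M = G/H \) with \( H = \stab_G(x_0) \), and a multi-moment map \( \nu\colon M\to\Pg^* \) is available: it exists and is unique by Theorem~\ref{thm:algmmpexc} when \( G \) is \( (2,3) \)-trivial, and is assumed to exist (hence unique, again by Theorem~\ref{thm:algmmpexc}) when only \( b_2(\g)=0 \). Equivariance forces the image of \( \nu \) to be a single orbit \( \mathcal O = G\cdot\beta \) with \( \beta = \nu(x_0) \). The computation I would single out is that, for \( X\in T_xM \) and \( \Lel p\in\Pg \), the defining relation~\eqref{eq:mmmapdefrel} yields \( \inp{d\nu_x(X)}{\Lel p} = (p\hook c)(X) = c(X\wedge p) \). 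Hence \( \ker d\nu_x = \{\,X : c(X\wedge p)=0 \text{ for all } \Lel p\in\Pg\,\} \), which by weak \( \Pg \)-transitivity is zero. Thus \( d\nu \) is everywhere injective; since \( \nu \) surjects onto \( \mathcal O \), a dimension count upgrades this to a local diffeomorphism \( \nu\colon M\to\mathcal O \). Equivalently, writing \( K=\stab_G\beta \), the inclusion \( H\subseteq K \) coming from equivariance becomes an equality of Lie algebras \( \h = \stab_\g\beta \), so that \( M=G/H\to G/K=\mathcal O \) is a covering with discrete fibre \( K/H \).

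It then remains to verify~\eqref{eq:stabeqker}. Here I would use~\eqref{eq:Psi} to identify, at \( x_0 \), the contraction \( X\hook c \) of the fundamental field \( X \) of \( \Lel Z\in\g \) with \( \Psi(\Lel Z,\cdot,\cdot) = (\dP\beta)(\Lel Z,\cdot,\cdot) \), noting that \( \Psi \) vanishes as soon as an argument lies in \( \h \), since the corresponding fundamental field vanishes at \( x_0 \). Thus \( \Lel Z\in\ker(\dP\beta) \) exactly when \( X\hook c=0 \) at \( x_0 \), which by 2-plecticity (itself forced by weak \( \Pg \)-transitivity, as \( c(X\wedge p)\ne0 \) entails \( X\hook c\ne0 \)) happens exactly when \( X=0 \), i.e.\ \( \Lel Z\in\h \). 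Combined with \( \h=\stab_\g\beta \) from the previous step, this gives \( \stab_\g\beta = \h = \ker(\dP\beta) \), which is precisely~\eqref{eq:stabeqker}.

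For the reverse direction, given \( \mathcal O=G\cdot\beta \) satisfying~\eqref{eq:stabeqker}, Theorem~\ref{thm:orb1} already furnishes a 2-plectic structure on \( \mathcal O \) whose multi-moment map is the inclusion \( \mathcal O\hookrightarrow\Pg^* \); only weak \( \Pg \)-transitivity needs checking. For \( \Lel Z\in\g \) the vector \( X(\beta) \) is non-zero exactly when \( \Lel Z\notin\stab_\g\beta \), and by definition of the stabiliser this means \( \inp\beta{\ad_{\Lel Z}\Lel p}\ne0 \) for some \( \Lel p\in\Pg \); since~\eqref{eq:adjoint} gives \( c(X\wedge p) = -\inp\beta{\ad_{\Lel Z}\Lel p} \), the required \( \Lel p \) exists, and transitivity of \( G \) propagates this to every point. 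I expect the main obstacle to lie in the forward direction, specifically the bookkeeping that disentangles the two distinct roles of the hypotheses: weak \( \Pg \)-transitivity is exactly what makes \( d\nu \) injective, and so simultaneously produces the covering map and the identity \( \h=\stab_\g\beta \), whereas the 2-plectic property is what pins down \( \ker(\dP\beta)=\h \); it is only their combination that yields~\eqref{eq:stabeqker}. Keeping the covering-theoretic and the algebraic statements cleanly separated, while treating the automatic and the assumed existence of \( \nu \) uniformly, is the delicate part.
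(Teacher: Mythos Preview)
Your proposal is correct and follows essentially the same approach as the paper's proof: both use the formula \( \inp{\nu_*(X)}{\Lel p} = c(X\wedge p) \) to deduce that weak \( \Pg \)-transitivity makes \( \nu_* \) injective, hence \( \nu \) is a local diffeomorphism onto an orbit, and then combine this with 2-plecticity and \( \Psi=\dP\beta \) to obtain~\eqref{eq:stabeqker}; the converse is handled identically via Theorem~\ref{thm:orb1} and the same pairing formula. Your write-up is simply more explicit (and your aside that weak \( \Pg \)-transitivity already forces 2-plecticity is a correct bonus observation the paper does not make).
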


\begin{proof}
  The differential \( \nu_*\colon T_xM \to \Pg^* \) of the
  multi-moment map is given by \( \inp{\nu_*(X)}{\Lel p} = (X\hook
  c)(p) \).  As \( G \) acts weakly \( \Pg \)-transitively, we see
  that \( \nu_*(X) \) is non-zero for each non-zero \( X \).  Thus \(
  \nu_* \) is injective and \( \nu \) has discrete fibres.  Its image
  is an orbit \( G\cdot \beta \) and the proof of
  Theorem~\ref{thm:algmmpexc} shows that the \( 3 \)-form~\( c \) on
  \( M \) is induced by \( \Psi = \dP\beta \).  As \( \nu \) is a
  local diffeomorphism and \( c \)~is 2-plectic it follows that
  \eqref{eq:stabeqker} is satisfied.  Conversely, any orbit \(
  \mathcal O = G\cdot\beta \) satisfying~\eqref{eq:stabeqker} is
  2-plectic with injective multi-moment map~\( \nu \).  Since \( \nu_*
  \) is injective, the equation \( \inp{\nu_*(X)}{\Lel p} = c(X\wedge
  p) \) shows that the action is weakly \( \Pg \)-transitive.
\end{proof}

\subsection{Compact Lie groups with bi-invariant metric}
\label{sec:biinv}

Let \( G \) be a compact semi-simple Lie group.  Its Lie algebra \( \g
\) admits an inner product~\( \inp\cdot\cdot \) invariant under the
adjoint representation, which is proportional to minus the Killing
form.  The left- and right-invariant Cartan one-forms \(
\theta^L,\theta^R\in\Omega^1(G,\g) \) are given by \(
\theta^L(X)=(L_{g^{-1}})_*(X) \), \( \theta^R(X)=(R_{g^{-1}})_*(X) \),
where \( L_g,R_g\colon G\to G \) denote left- and right-multiplication
by~\( g \).  A bi-invariant, and hence closed, three-form is defined
on \( G \) by
\begin{equation}
  \label{eq:bi-invariant}
  c(X,Y,Z) = \inp{[\theta^L(X),\theta^L(Y)]}{\theta^L(Z)},
  \qquad\text{for \( X,Y,Z\in\Gamma(TG) \).}
\end{equation}
This is 2-plectic but is zero on elements of \( \Pg \) for \( G \)
acting on the left.  Instead for \( H,K\leqslant G \), let \( H\times
K \) act on \( G \) by
\begin{equation*}
  (h,k)\cdot g=L_h\circ R_{k^{-1}}(g) = hgk^{-1}.
\end{equation*}
An element \( \Lel X=(\Lel X^H,\Lel X^K)\in\h\oplus\lk \) induces a
vector field \( X \) on \( G \) given by \( X_g = \frac d{dt}
\exp(t\Lel X^H)g\exp(-t\Lel X^K)|_{t=0} = (R_g)_*\Lel X^H -
(L_g)_*\Lel X^K \).  For \( \Lel p = \sum_{j=1}^k\Lel X_j \wedge \Lel
Y_j \in\Pg[\h \oplus \lk] \), we have that \( \sum_{j=1}^k
[X_j^H,Y_j^H] = 0 \) and \( \sum_{j=1}^k [X_j^K,Y_j^K] = 0 \), and
claim that
\begin{equation*}
  \inp{\nu(g)}{\Lel p} = \sum_{j=1}^k \bigl( \inp{\Lel
    X_j^H}{\Ad_g(\Lel Y_j^K)} - \inp{\Lel Y_j^H}{\Ad_g(\Lel X_j^K)}
  \bigr),
\end{equation*}
defines a multi-moment map \( \nu\colon G\to\Pg[\h \oplus \lk]^* \).
This follows from the following computation for \( A_g = (R_g)_*\Lel A
\):
\begin{equation*}
  \begin{split}
    d\inp\nu{\Lel p}(A)_g
    &= \left.\frac d{dt}\inp{\nu(\exp(t\Lel A)g)}{\Lel p}\right|_{t=0}\\
    &= \inp{\Lel X_j^H}{[\Lel A,\Ad_g(\Lel Y_j^K)]} - \inp{\Lel
      Y_j^H}{[\Lel A,\Ad_g(\Lel X^K_j)]}\\
    &=-\inp{[\Ad_{g^{-1}}\Lel X_j^H,\Lel Y_j^K]+[\Lel
      X_j^K,\Ad_{g^{-1}}\Lel Y_j^H]}{\theta^L(A)_g} = (p\hook c)(A)_g,
  \end{split}
\end{equation*}
since \( \theta^L(A)_g = \Ad_{g^{-1}}\Lel A \).  By considering \(
\Lel p \in \Pg[\h \oplus \lk] \) of the form \( \Lel p = (\Lel X^H,0)
\wedge (0,\Lel Y^K) \) with \( \Lel X^H \in \h \) and \( \Lel
Y^K\in\lk \) arbitrary, one finds that
\begin{equation*}
  \ker(\nu_*)_g = (L_g)_*[\Ad_{g^{-1}}\h,\lk]^\perp.
\end{equation*}
In the case that \( \h = \g \), the set \( \ker(\nu_*)_e \) is a
subalgebra of \( \g \) and the image of \( \nu \) is an orbit.

One example is given by \( \h = \g = \su(3) \) and \( \lk = \un(1) =
\diag(ia,-ia,0) \).  Then \( \ker(\nu_*)_e = \un(2) \) and the
multi-moment map~\( \nu \) is the projection from \( \SU(3) \) to \(
\CP(2) = \SU(3)/\Un(2) \).  Now \( \CP(2) \) is quaternionic Kähler,
and \( \SU(3) \) carries a hypercomplex structure
\cite{Joyce:hypercomplex}.  The bi-invariant metric on \( \SU(3) \)
realises the hypercomplex structure as a strong \HKT manifold whose
torsion-three form \( c \) is given by~\eqref{eq:bi-invariant}
\cite{Grantcharov-P:HKT}.  The symmetry group of this \HKT structure
is precisely \( H\times K = \SU(3)\times\Un(1) \) and the map \( \nu
\) realises \( \SU(3) \) as a twisted associated bundle over \( \CP(2)
\) \cite{Pedersen-PS:hypercomplex}.

\subsection{Strong geometries from symplectic manifolds}

Let us show how the theory of multi-moment maps for strong geometries
subsumes that of symplectic moment maps.  Given a symplectic manifold
\( (N,\omega) \) one has a strong geometry on \( M=S^1\times N \) with
\( c = \phi\wedge \omega \), where \( \phi \) is the invariant
one-form dual to the circle action on \( S^1 \).  This geometry is
2-plectic.  If \( N \) comes with a symplectic action of a Lie group
\( H \), then \( G = S^1\times H \) is a symmetry group for the strong
geometry on~\( M \).  The corresponding Lie kernel is given by
\begin{equation*}
  \Pg[\bR+\h] \cong \Pg[\h] + \bR\otimes\h.
\end{equation*}

\begin{proposition}
  \label{thm:sympstrong}
  Let \( (N,\omega) \) be a symplectic manifold with a Hamiltonian
  action of~\( H \), moment map \( \mu\colon N\to \h^* \).  Then \( M
  = S^1 \times N \) carries a strong geometry with symmetry group \( G
  = S^1 \times H\) and this has a multi-moment map~\( \nu \) that may
  be identified with~\( \mu \).
\end{proposition}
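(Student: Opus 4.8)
The plan is to verify directly that $c = \phi \wedge \omega$ defines a ($2$-plectic) strong geometry, and then to read the multi-moment map off the structure of the Lie kernel. First I would record why the displayed decomposition $\Pg[\bR+\h] \cong \Pg[\h] + \bR\otimes\h$ holds: since the $\bR$-factor is central, $L(\Lel T \wedge \Lel X) = [\Lel T, \Lel X] = 0$ for $\Lel T$ spanning $\operatorname{Lie}(S^1)$ and any $\Lel X \in \h$, so all of $\bR\otimes\h$ lies in $\ker L$, while on $\Lambda^2\h$ the kernel of $L$ is exactly $\Pg[\h]$. Closedness of $c$ is immediate from $d\phi = 0$ and $d\omega = 0$, and the $2$-plectic property is a pointwise check: writing a tangent vector as $aT + V$ with $T$ the fundamental field of $S^1$ and $V$ tangent to $N$, one has $(aT+V)\hook c = a\,\omega - \phi\wedge(V\hook\omega)$, which vanishes only if $a=0$ and $V\hook\omega = 0$, forcing $V=0$ by non-degeneracy of $\omega$.

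The core of the argument is to compute $p\hook c$ on the two summands. Writing $X$ for the vector field generated by $\Lel X\in\h$, for $\Lel p = \Lel T\wedge\Lel X \in \bR\otimes\h$ the contraction of $c = \phi\wedge\omega$ collapses, using $\phi(T)=1$, $\phi(X)=0$ and $T\hook\omega = 0$, to $p\hook c = c(T,X,\cdot) = X\hook\omega$. For $\Lel p = \sum_j \Lel X_j\wedge\Lel Y_j \in \Pg[\h]$ all vectors are tangent to $N$, so only the term carrying $\phi$ survives and one obtains $p\hook c = \bigl(\sum_j \omega(X_j,Y_j)\bigr)\,\phi$.

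Here is the step I expect to carry the weight. I would invoke the moment-map relation in the form $d\inp{\mu}{\Lel X} = X\hook\omega$ together with the equivariance identity $\omega(X,Y) = \inp{\mu}{[\Lel X,\Lel Y]}$ (equivalently $\{\inp{\mu}{\Lel X},\inp{\mu}{\Lel Y}\} = \inp{\mu}{[\Lel X,\Lel Y]}$, which holds precisely because $\mu$ is an \emph{equivariant} moment map, with no cocycle). Then for $\Lel p\in\Pg[\h]$ the coefficient becomes $\sum_j \omega(X_j,Y_j) = \inp{\mu}{\sum_j[\Lel X_j,\Lel Y_j]} = \inp{\mu}{L(\Lel p)} = 0$, so $p\hook c$ vanishes identically on the $\Pg[\h]$-summand. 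This is consistent with Lemma~\ref{lem:kercalc}: $p\hook c$ must be closed, and being a multiple of $\phi$ it has constant coefficient, which the identity pins to zero. The delicate point is thus ensuring the equivariant normalisation of $\mu$ is used, since it is exactly this that kills the $\Pg[\h]$-component and makes the identification with $\mu$ clean.

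Finally I would assemble the map. Define $\nu\colon M\to\Pg[\bR+\h]^*$ by $\inp{\nu}{\Lel T\wedge\Lel X} = \inp{\mu}{\Lel X}$ (constant along $S^1$) and $\inp{\nu}{\Lel p} = 0$ for $\Lel p\in\Pg[\h]$. The defining relation~\eqref{eq:mmmapdefrel} then holds on each summand by the two computations above. The splitting $(\bR\otimes\h)\oplus\Pg[\h]$ is $G$-invariant, with $S^1$ central acting trivially and $H$ acting by the adjoint on $\h\cong\bR\otimes\h$; hence equivariance of $\nu$ is equivalent to coadjoint-equivariance of $\mu$, which holds by hypothesis. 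The identification $(\bR\otimes\h)^*\cong\h^*$ via $\Lel T\wedge\Lel X\mapsto\Lel X$ then exhibits $\nu$ as $\mu$, as claimed.
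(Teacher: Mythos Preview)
Your proof is correct and follows essentially the same route as the paper: the key step in both is to show that \( \sum_j \omega(X_j,Y_j) = \inp{\mu}{\sum_j[\Lel X_j,\Lel Y_j]} = 0 \) for \( \Lel p \in \Pg[\h] \) via equivariance of \( \mu \), and then to define \( \nu \) by zero on \( \Pg[\h] \) and by \( \inp{\mu}{\Lel X} \) on \( \Lel T\wedge\Lel X \). You add a little more detail (the explicit Lie-kernel decomposition and the 2-plectic check), but the substance is identical.
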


\begin{proof}
  We first claim that \( p\hook\omega = 0 \), for each \( \Lel p \in
  \Pg[\h] \subset \Pg \).  Writing \( \Lel p=\sum_{j=1}^k \Lel
  X_j\wedge \Lel Y_j\in\Pg[\h] \), we have
  \begin{equation*}
    \omega(p)
    = \sum_{j=1}^k \omega(X_j,Y_j) = \sum_{j=1}^k Y_j\hook
    d\inp\mu{\Lel X_j}
    =\sum_{j=1}^k \Ld_{Y_j}\inp\mu{\Lel X_j}.
  \end{equation*}
  But \( \mu \) is equivariant, so \( \Ld_Y\inp\mu{\Lel X} =
  \inp\mu{[X,Y]} \).  As \( \sum_{j=1}^k [\Lel X_j,\Lel Y_j] = 0 \) it
  follows that \( \omega(p) = 0 \), as claimed.

  Now we may define \( \nu\colon M\to \Pg^* \) by
  \begin{equation*}
    \inp\nu{\Lel p} = 0,\qquad \inp\nu{\Lel T\wedge \Lel X} =
    \inp\mu{\Lel X},
  \end{equation*}
  for \( \Lel p \in \Pg[\h] \) and \( \Lel X\in \h \), where \( T \)
  is the generator of the \( S^1 \) action on the first factor of \( M
  = S^1 \times G \).  Now \( d\inp\nu{\Lel p} = 0 = p \hook c \) and
  \begin{equation*}
    d\inp\nu{\Lel T\wedge \Lel X} = X\hook\mu = (T\wedge X)\hook c,
  \end{equation*}
  so equation~\eqref{eq:mmmapdefrel} is satisfied.  As the definition
  of \( \nu \) is equivariant, we have that \( \nu \) is a
  multi-moment map.
\end{proof}

\section{Reduction of torsion-free $G_2$-manifolds}
\label{sec:G2}

Let us recall the fundamental aspects of \( G_2 \)-geometry
from~\cite{Bryant:exceptional}.  On \( \bR^7 \) we consider the
three-form \( \phi_0 \) given by
\begin{equation}
  \label{eq:phi0}
  \phi_0 = e_{123} + e_1 (e_{45} + e_{67}) + e_2 (e_{46} -
  e_{57}) - e_3(e_{47} + e_{56}),
\end{equation}
where \( e_1,\dots,e_7 \) is the standard dual basis and wedge signs
have been omitted.  The stabiliser of \( \phi_0 \) is the compact \(
14 \)-dimensional Lie group
\begin{equation*}
  G_2 = \{\, g\in\GL(7,\bR) : g^*\phi_0=\phi_0\,\}.
\end{equation*}
This group preserves the standard metric on \( g_0 =
\sum_{i=1}^7{e_i}^2 \) on \( \bR^7 \) and the volume form \( \vol_0 =
e_{1234567} \).  These tensors are uniquely determined by~\( \phi_0 \)
via the relation \( 6g_0(X,Y)\vol_0 = (X\hook\phi_0) \wedge
(Y\hook\phi_0) \wedge \phi_0 \).  The Hodge \( \Hodge \)-operator
gives a four-form
\begin{equation*}
  \Hodge\phi_0 = e_{4567} + e_{23}(e_{67} + e_{45}) +
  e_{13}(e_{57} - e_{46}) - e_{12}(e_{56} + e_{47}).
\end{equation*}

A \( G_2 \)-structure on a seven-manifold \( Y \) is given by a
three-form \( \phi \in \Omega^3(Y) \) which is linearly equivalent at
each point to \( \phi_0 \).  It determines a metric~\( g \), a volume
form \( \vol \) and a four-form \( \Hodge\phi \) on~\( Y \).  The \(
G_2 \)-structure is called \emph{torsion-free} if both of the forms \(
\phi \) and \( \Hodge\phi \) are closed.  This happens precisely when
\( \LC\phi=0 \) \cite{Fernandez-G:G2}.  One then calls \( (Y,\phi) \)
a torsion-free \( G_2 \)-manifold.  In this situation the metric \( g
\) has holonomy contained in \( G_2 \).

Since a torsion-free \( G_2 \)-geometry comes equipped with a closed
three-form, we may study multi-moment maps for such manifolds.  Let us
assume that \( (Y,\phi) \) has a two-torus symmetry with a
non-constant multi-moment map~\( \nu\colon Y \to \Pg[\bR^2]^* \cong
\bR \).  Choosing generating vector fields \( U \) and \( V \) for the
\( T^2 \)-action, we have \( d\nu = \phi(U,V,\cdot ) \).  The latter
is non-zero if and only if \( U \) and \( V \) are linearly
independent.  So \( T^2 \) acts locally freely on some open set~\( Y_0
\subset Y \).

We may define three two-forms on \( Y_0 \) by
\begin{equation*}
  \omega_0 = V \hook U \hook \Hodge\phi,\quad
  \omega_1 = U \hook \phi
  \quad\text{and}\quad
  \omega_2 = V \hook \phi.
\end{equation*}
To relate these to the \( G_2 \)-structure consider the positive
function \( h \) and one-forms \( \theta_i \) given by
\begin{gather*}
  (g_{UU}g_{VV} - g_{UV}^2)\,h^2 = 1\\
  \theta_1 = h^2(g_{VV} U^\flat - g_{UV} V^\flat),\quad \theta_2 =
  h^2(g_{UU} V^\flat - g_{UV} U^\flat),
\end{gather*}
where \( U^\flat = g(U,\cdot) \) and \( g_{UU} = g(U,U) \), etc.  Note
that \( h \) is well-defined on \( Y_0 \), and that \(
(\theta_1,\theta_2) \) is dual to \( (U,V) \).

\begin{proposition}
  \label{prop:G2T2}
  On \( Y_0 \), the three-form \( \phi \) and the four-form \(
  \Hodge\phi \) are
  \begin{gather*}
    \phi = h^2\omega_0\wedge d\nu + \omega_1\wedge\theta_1 +
    \omega_2\wedge\theta_2 + d\nu\wedge\theta_2\wedge\theta_1,
    \\
    \begin{split}
      \Hodge\phi = \omega_0\wedge\theta_1\wedge\theta_2& + h^2\bigl(
      g_{VV} \omega_1\wedge\theta_2\wedge d\nu - g_{UU}
      \omega_2\wedge\theta_1\wedge d\nu \eqbreak + g_{UV} (
      \omega_1\wedge\theta_1 - \omega_2\wedge\theta_2 ) \wedge d\nu +
      \tfrac12\omega_0\wedge\omega_0 \bigr).
    \end{split}
  \end{gather*}
\end{proposition}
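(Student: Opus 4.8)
The plan is to establish both formulae pointwise. Fix \( p\in Y_0 \); then \( T_pY \) carries the \( G_2 \)-three-form \( \phi \), the induced metric \( g \), and the four-form \( \Hodge\phi \), while \( U,V \) are linearly independent. I split \( T_pY = \Span{U,V}\oplus\mathcal H \) with \( \mathcal H = \Span{U,V}^\perp \). Since \( \theta_1,\theta_2 \) lie in \( \Span{U^\flat,V^\flat} \) they annihilate \( \mathcal H \), and as \( (\theta_1,\theta_2) \) is dual to \( (U,V) \) the coframe splits into the vertical span of \( \theta_1,\theta_2 \) and the horizontal one-forms (those killed by \( U\hook \) and \( V\hook \)). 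I shall use the elementary fact that any exterior form \( \alpha \) on such a split coframe is determined by the triple \( (U\hook\alpha,\;V\hook\alpha,\;\alpha^{\mathcal H}) \), where \( \alpha^{\mathcal H} \) is the restriction of \( \alpha \) to \( \Lambda^\bullet\mathcal H^* \): the first two data fix every component carrying a \( \theta \)-leg, and \( \alpha^{\mathcal H} \) the rest.

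For the three-form, write \( \Phi \) for the right-hand side. A direct contraction, using \( U\hook\theta_1 = 1 = V\hook\theta_2 \), \( U\hook\theta_2 = 0 = V\hook\theta_1 \) together with the identities \( U\hook\phi = \omega_1 \), \( V\hook\phi = \omega_2 \) and \( V\hook U\hook\phi = d\nu \), gives \( U\hook\Phi = \omega_1 \) and \( V\hook\Phi = \omega_2 \), so \( \Phi \) and \( \phi \) have equal \( U \)- and \( V \)-contractions. Moreover the only term of \( \Phi \) with no \( \theta \)-factor is \( h^2\omega_0\wedge d\nu \), since \( \omega_0 \) and \( d\nu \) are horizontal whereas \( \omega_1,\omega_2 \) are not. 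By the determination fact above, the three-form identity reduces to the single claim \( \phi^{\mathcal H} = h^2\,\omega_0\wedge d\nu \).

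The same bookkeeping handles \( \Hodge\phi \): its top vertical component is \( V\hook U\hook\Hodge\phi = \omega_0 \), producing the term \( \omega_0\wedge\theta_1\wedge\theta_2 \), while matching the remaining data reduces to the pointwise identities for \( U\hook\Hodge\phi \), \( V\hook\Hodge\phi \) and for the horizontal part \( (\Hodge\phi)^{\mathcal H} = \tfrac12 h^2\,\omega_0\wedge\omega_0 \). All of these are genuine \( G_2 \)-relations, tying \( \Hodge\phi \) to \( \phi \) through the induced metric, and I would verify them by covariance. Because \( G_2 \) acts transitively on \( S^6 \) with isotropy \( \SU(3) \), and \( \SU(3) \) acts transitively on the unit sphere of the orthogonal complement \( \cong\bC^3 \), the group \( G_2 \) is transitive on pairs \( (U,V) \) with a prescribed Gram matrix. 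As each identity is \( G_2 \)-natural and homogeneous in \( (U,V) \), it is enough to test it on the standard family \( U = \sqrt{g_{UU}}\,e_1 \), \( V = (g_{UV}/\sqrt{g_{UU}})\,e_1 + t\,e_2 \) with \( t>0 \) fixed by the Gram matrix, using the explicit \( \phi_0 \) of \eqref{eq:phi0} and the displayed \( \Hodge\phi_0 \) and reading \( \theta_1,\theta_2 \) and \( h \) off the Gram matrix; this is a finite calculation.

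The main obstacle is precisely this last step: matching the horizontal and single-\( \theta \) pieces with the correct weights \( g_{UU},g_{VV},g_{UV} \) and powers of \( h \), and in particular recognising the horizontal part of \( \Hodge\phi \) as the quadratic \( \tfrac12 h^2\,\omega_0\wedge\omega_0 \). Here one must invoke the defining \( G_2 \)-normalisation \( 6\,g(X,Y)\vol = (X\hook\phi)\wedge(Y\hook\phi)\wedge\phi \) and its \( \Hodge\phi \)-counterpart to fix the constants. The quadratic term is natural in hindsight: on the five-dimensional \( \mathcal H \) the two-form \( \omega_0 \) has rank four with \( d\nu \) annihilating its kernel, so \( \omega_0\wedge d\nu \) exhausts the horizontal three-form \( \phi^{\mathcal H} \) while \( \tfrac12\omega_0\wedge\omega_0 \) is the horizontal four-form that \( \Hodge\phi \) carries in the complementary directions.
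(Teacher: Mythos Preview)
Your proposal is correct and, at its core, follows the same route as the paper: a pointwise verification in an adapted orthonormal frame for the \(U,V\)-plane, exploiting that \(G_2\) acts transitively on such pairs with fixed Gram matrix. The paper simply writes \(E_1=aU\), \(E_2=bU+cV\) (your standard family in inverse form), reads off \(d\nu,\omega_0,\omega_1,\omega_2,\theta_1,\theta_2\) in the coframe \(e_1,\dots,e_7\), and substitutes; your preliminary reduction via the \(U\hook\), \(V\hook\) and horizontal-part decomposition is a clean organisational device but leads to the same finite computation.
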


\begin{proof}
  Working locally at a point and using the \( T^2 \)-action we may
  write the first two standard basis elements of \( \bR^7 \) as \( E_1
  = aU = U/g_{UU}^{1/2} \), \( E_2 = {bU + cV} = hg_{UU}^{1/2} (V -
  g_{UV}g_{UU}^{-1} U) \).  We then have \( \theta_1 = ae_1 + be_2 \)
  and \( \theta_2 = ce_2 \).  Now using \eqref{eq:phi0} we get \( ac\,
  d\nu = e_3\), \( ac\, \omega_0 = - (e_{56} + e_{47}) \), \( a\,
  \omega_1 = e_{23} + e_{45} + e_{67} \) and
  \begin{equation*}
    ac\, \omega_2 = -a (e_{13}- e_{46} + e_{57}) - b(e_{23} + e_{45} +
    e_{67}).
  \end{equation*}
  The given expressions now follow.
\end{proof}

Now suppose that \( t\in\nu(Y_0)\subset \bR \) is a regular value for
\( \nu\colon Y_0 \to \bR \).  Then \( \Xt = \nu^{-1}(t) \) is a smooth
hypersurface with unit normal \( N = h(d\nu)^\sharp \).  This inherits
an \( \SU(3) \)-structure \( (\sigma,\psi_\pm) \) given by
\begin{equation}
  \label{eq:SU3-sub}
  \begin{gathered}
    \sigma = N\hook\phi = h\omega_0 +
    h^{-1}\theta_1\wedge\theta_2,\quad \psi_+ = \iota^*\phi =
    \iota^*\omega_1\wedge\theta_1 +
    \iota^*\omega_2\wedge\theta_2,\\
    \begin{aligned}
      \psi_- = -N\hook\Hodge\phi = h\bigl( &g_{VV}
      \iota^*\omega_1\wedge\theta_2 - g_{UU}
      \iota^*\omega_2\wedge\theta_1 \eqbreak + g_{UV} (
      \iota^*\omega_1\wedge \theta_1 - \iota^*\omega_2\wedge\theta_2)
      \bigr),
    \end{aligned}
  \end{gathered}
\end{equation}
where \( \iota\colon X_t \to Y_0 \) is the inclusion.  As shown in
\cite{Chiossi-S:SU3-G2}, oriented hypersurfaces in torsion-free \( G_2
\)-manifolds are \emph{half-flat}, meaning that
\begin{equation}
  \label{eq:half-flat}
  \sigma\wedge d\sigma=0 \quad\text{and}\quad d\psi_+=0.
\end{equation}

Suppose \( T^2 \) acts freely on \( \Xt = \nu^{-1}(t) \).

\begin{definition}
  The \( T^2 \) \emph{reduction} of \( Y \) at level \( t \) is the
  four-manifold
  \begin{equation*}
    M = \nu^{-1}(t)/T^2 = \Xt / T^2.
  \end{equation*}
\end{definition}

\begin{proposition}
  \label{prop:reduction}
  The \( T^2 \) reduction \( M \) carries three pointwise linearly
  independent symplectic forms defining the same orientation.
\end{proposition}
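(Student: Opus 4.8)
The plan is to produce the three symplectic forms on \( M \) by descending the two-forms \( \omega_0, \omega_1, \omega_2 \) through the quotient \( \pi\colon \Xt \to M = \Xt/T^2 \). The first task is to check that each \( \omega_i \) is closed on \( Y_0 \). Since \( \phi \) and \( \Hodge\phi \) are closed and \( T^2 \)-invariant, Cartan's formula gives \( d\omega_1 = d(U\hook\phi) = \Ld_U\phi - U\hook d\phi = 0 \) and likewise \( d\omega_2 = 0 \); for \( \omega_0 = V\hook U\hook\Hodge\phi \) one applies Cartan's formula twice, using \( \Ld_U\Hodge\phi = \Ld_V\Hodge\phi = 0 \), \( d\Hodge\phi = 0 \) and \( [U,V] = 0 \), to conclude \( d\omega_0 = 0 \).

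Next I would show that the pullbacks \( \iota^*\omega_i \) are \emph{basic} for the \( T^2 \)-action on \( \Xt \), so that they descend. Invariance is immediate. For horizontality I would use that \( U, V \) are tangent to \( \Xt \) (as \( U\hook d\nu = V\hook d\nu = 0 \)) together with \( \iota^*d\nu = 0 \): indeed \( U\hook\omega_1 = 0 \) and \( V\hook\omega_1 = V\hook U\hook\phi = -d\nu \), which pulls back to zero on \( \Xt \); symmetrically for \( \omega_2 \); and \( \omega_0 \) is manifestly annihilated by both \( U\hook \) and \( V\hook \). As \( T^2 \) acts freely on \( \Xt \), each \( \iota^*\omega_i \) is the pullback \( \pi^*\bar\omega_i \) of a unique two-form \( \bar\omega_i \) on \( M \), and closedness descends since \( \pi^* \) is injective on forms.

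It remains to verify that \( \bar\omega_0, \bar\omega_1, \bar\omega_2 \) are pointwise linearly independent and nondegenerate with a common orientation; this pointwise algebra is the crux. Here I would use the local orthonormal coframe \( e_1,\dots,e_7 \) from the proof of Proposition~\ref{prop:G2T2}, in which the orbit directions \( \theta_1, \theta_2 \) span \( \Span{e_1,e_2} \), the normal \( N \) is the \( e_3 \)-direction, and \( \iota^*e_3 = ac\,\iota^*d\nu = 0 \). The horizontal coframe \( e_4,\dots,e_7 \) thus furnishes an oriented orthonormal coframe for the quotient metric on \( M \). Substituting \( e_3 = 0 \) into the expressions computed there gives \( \bar\omega_1 \propto e_{45} + e_{67} \), \( \bar\omega_0 \propto -(e_{56} + e_{47}) \) and \( \bar\omega_2 \propto a(e_{46} - e_{57}) - b(e_{45} + e_{67}) \). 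Each of these is \emph{self-dual} for the quotient orientation \( e_{4567} > 0 \), and in terms of the standard self-dual triple \( e_{45}+e_{67} \), \( e_{46}-e_{57} \), \( e_{47}+e_{56} \) their coefficient matrix is nonsingular precisely because \( a = g_{UU}^{-1/2} \ne 0 \). Hence the \( \bar\omega_i \) are linearly independent, and being nonzero self-dual forms they satisfy \( \bar\omega_i\wedge\bar\omega_i = \lvert\bar\omega_i\rvert^2\vol > 0 \), so each is symplectic and all three induce the common orientation. The main obstacle is thus the bookkeeping in this final step: identifying the descended forms with a self-dual triple and confirming that the quotient metric and orientation make the relevant wedge products positive.
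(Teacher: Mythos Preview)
Your proposal is correct and follows essentially the same approach as the paper: you establish closedness of the \( \omega_i \) via Cartan's formula and \( T^2 \)-invariance, check that the pullbacks to \( \Xt \) are basic (using \( V\hook\omega_1 = d\nu \), etc.), and then use the local coframe from Proposition~\ref{prop:G2T2} to read off the descended forms. Your framing of the final pointwise step in terms of self-duality for the orientation \( e_{4567} \) is a tidy way to package the linear independence and common orientation, but it is equivalent to the paper's direct inspection of the explicit expressions and the nonvanishing of \( ac = h \).
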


\begin{proof}
  Consider the two-forms \( \omega_0 \), \( \omega_1 \), \( \omega_2
  \) on \( Y_0 \).  These forms are \( T^2 \)-invariant and closed,
  since \( d\omega_0 = \Ld_V(U\hook\Hodge\phi) = 0 \) and \( d\omega_1
  = \Ld_U\phi = 0 \), cf.~\eqref{eq:X-c}.  Furthermore, as \( V\hook
  \omega_1 = d\nu \), their pull-backs to \( \Xt = \nu^{-1}(t) \) are
  basic.  Thus they descend to three closed forms \( \sigma_0 \), \(
  \sigma_1 \) and \( \sigma_2 \) on~\( M \).  The proof of
  Proposition~\ref{prop:G2T2} shows that at a point \( h \sigma_0 =
  -(e_{56} + e_{47}) \), \( h \sigma_1 = c(e_{45} + e_{67}) \) and \(
  h \sigma_2 = a(e_{46}+e_{75}) - b(e_{45}+e_{67}) \), with \( ac = h
  \ne 0 \).  Thus \( \sigma_0 \), \( \sigma_1 \) and \( \sigma_2 \)
  are non-degenerate symplectic forms defining the same orientation.
\end{proof}

The expressions for the forms in this proof show that they satisfy the
following relations on~\( M \):
\begin{equation}
  \label{eq:G2conrel}
  \begin{gathered}
    h^2\,{\sigma_0}^2=g_{UU}^{-1}\,{\sigma_1}^2 = g_{VV}^{-1}\,
    {\sigma_2}^2 = 2\vol_M,\\
    \sigma_0\wedge\sigma_1 = 0 = \sigma_0\wedge\sigma_2,\quad
    \sigma_1\wedge\sigma_2 = 2g_{UV} \vol_M.
  \end{gathered}
\end{equation}
Here \( \vol_M \) is induced by the element \( e_{4567} \) on \( Y \),
which is the volume element on directions orthogonal to the \( T^2
\)-action on~\( \Xt \).  Note that \( (\theta_1,\theta_2) \) is a
connection one-form for \( \Xt \to M \) regarded as a principal \( T^2
\)-bundle.

We now consider how this construction may be inverted, producing the
\( G_2 \)-geometry of \( Y \) from a triple of symplectic forms on a
four-manifold~\( M \).  Note that the relations \eqref{eq:G2conrel}
show that the symplectic forms \( \sigma_i \) define the same
orientation on~\( M \) and are pointwise linearly independent.  Indeed
the intersection matrix \( \tilde Q = (q_{ij}) \) with \(
\sigma_i\wedge\sigma_j = q_{ij}\sigma_0^2 \), for \( i,j=1,2,3 \), is
positive definite.  As in \cite{Donaldson-K:4}, the positive
three-dimensional subbundle \( \Lambda^+ =
\Span{\sigma_0,\sigma_1,\sigma_2} \subset \Lambda^2T^*M \) corresponds
to a unique oriented conformal structure on~\( M \).

\begin{definition}
  A \emph{coherent symplectic triple} \( \CST \) on a four-manifold \(
  M \) consists of three symplectic forms \( \sigma_0 \), \( \sigma_1
  \), \( \sigma_2 \) that pointwise span a maximal positive subspace
  of \( \Lambda^2T^*M \) and satisfy \( \sigma_0 \wedge \sigma_i = 0
  \) for \( i=1,2 \).
\end{definition}

Let \( Q = (q_{ij})_{i,j=1,2} \) be the lower-right \( 2\times 2 \)
submatrix of~\( \tilde Q \).  Since \( \det Q \) is positive, we may
write \( h = \sqrt{\det Q} \in C^\infty(M) \).

\begin{proposition}
  \label{prop:half-flat}
  Let \( (M,\CST) \) be a coherently tri-symplectic four-manifold.
  Suppose \( \X \) is a principal \( T^2 \)-bundle over \( M \) with
  connection one-form \( \Theta = (\theta_1, \theta_2) \).  Then the
  forms \( \sigma \), \( \psi_\pm \) given by
  \begin{equation}
    \label{eq:SU3hfl}
    \begin{gathered}
      \sigma = h\sigma_0 + h^{-1}\theta_1\wedge\theta_2,\quad
      \psi_+ = \sigma_1\wedge\theta_1 + \sigma_2\wedge\theta_2,\\
      \psi_- = h^{-1}(q_{22}\sigma_1\wedge\theta_2 -
      q_{11}\sigma_2\wedge\theta_1 + q_{12}(\sigma_1\wedge\theta_1 -
      \sigma_2\wedge\theta_2))
    \end{gathered}
  \end{equation}
  define an \( \SU(3) \)-structure on \( \X \).  This structure is
  half-flat if and only if \( d\Theta^+ = (\sigma_1,\sigma_2)A \) with
  \( \Tr(AQ) = 0 \).
\end{proposition}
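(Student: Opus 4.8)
The plan is to treat the two assertions separately: that \eqref{eq:SU3hfl} defines an \( \SU(3) \)-structure, and that half-flatness is governed by the curvature condition. For the first assertion I would argue pointwise. At a point of \( M \) the coherence conditions \( \sigma_0\wedge\sigma_i=0 \) together with positivity of the intersection matrix allow me to choose a coframe \( e_1,\dots,e_4 \) of \( T^*M \) putting \( \sigma_0,\sigma_1,\sigma_2 \) into the normal form used in the proof of Proposition~\ref{prop:reduction}; adjoining \( \theta_1,\theta_2 \) gives a coframe of \( T^*\X \).  Substituting into \eqref{eq:SU3hfl} and using \( h=\sqrt{\det Q} \), the triple \( (\sigma,\psi_\pm) \) takes exactly the form \eqref{eq:SU3-sub} of the \( \SU(3) \)-structure induced on a hypersurface in a \( G_2 \)-manifold (under \( q_{11}=h^2g_{UU} \), \( q_{22}=h^2g_{VV} \), \( q_{12}=h^2g_{UV} \)), and so is pointwise the standard flat model on \( \bR^6 \).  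This establishes the reduction to \( \SU(3) \).

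For half-flatness I would work directly on \( \X \), using that the basic forms \( \sigma_i \) and the function \( h \) are closed and \( dh \) is basic, and writing \( F_i=d\theta_i \) for the basic curvature two-forms.  Computing \( d\sigma \) from \eqref{eq:SU3hfl} and forming \( \sigma\wedge d\sigma \), the contributions containing \( dh \) either vanish for degree reasons or cancel in pairs, while those with a repeated \( \theta \) vanish, leaving
\[
  \sigma\wedge d\sigma = \sigma_0\wedge F_1\wedge\theta_2 - \sigma_0\wedge F_2\wedge\theta_1 .
\]
The decisive point is that on the four-manifold \( M \) the self-dual/anti-self-dual splitting determined by \( \Lambda^+=\Span{\sigma_0,\sigma_1,\sigma_2} \) gives \( \sigma_0\wedge F_i=\sigma_0\wedge F_i^+ \), and since \( \sigma_0\wedge\sigma_j=0 \) for \( j=1,2 \) only the \( \sigma_0 \)-component \( a_i \) of \( F_i^+ \) survives, so \( \sigma_0\wedge F_i=a_i\,\sigma_0^2 \).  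As \( \sigma_0^2\wedge\theta_1 \) and \( \sigma_0^2\wedge\theta_2 \) are independent, \( \sigma\wedge d\sigma=0 \) is equivalent to \( a_1=a_2=0 \), that is, to \( d\Theta^+ \) carrying no \( \sigma_0 \)-component; this is precisely the condition \( d\Theta^+=(\sigma_1,\sigma_2)A \) for a function-valued matrix \( A \).

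Finally I would compute \( d\psi_+=\sigma_1\wedge F_1+\sigma_2\wedge F_2 \), again replace each \( F_i \) by its self-dual part and write \( F_i^+=\sum_j\sigma_jA_{ji} \); the relations \( \sigma_i\wedge\sigma_j=q_{ij}\sigma_0^2 \) defining \( \tilde Q \) then give
\[
  d\psi_+ = \Tr(AQ)\,\sigma_0^2 ,
\]
so that \( d\psi_+=0 \) if and only if \( \Tr(AQ)=0 \).  Combining the two computations with the definition \eqref{eq:half-flat} of half-flatness yields the stated criterion.  I expect the main obstacle to be the bookkeeping in the computation of \( \sigma\wedge d\sigma \)—separating base and fibre degrees and checking that the \( dh \)-terms cancel—while the conceptual crux is the self-dual reduction, which via the coherence relations collapses every surviving wedge product to a multiple of \( \sigma_0^2 \).
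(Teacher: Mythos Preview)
Your proposal is correct and follows essentially the same route as the paper: a pointwise coframe computation to identify the \( \SU(3) \)-structure, then the direct computations \( \sigma\wedge d\sigma=\sigma_0\wedge F_1\wedge\theta_2-\sigma_0\wedge F_2\wedge\theta_1 \) and \( d\psi_+=\sigma_1\wedge F_1+\sigma_2\wedge F_2 \) together with the self-dual/anti-self-dual splitting on \( M \).  Your exposition makes the role of the decomposition \( \Lambda^2=\Lambda^+\oplus\Lambda^- \) and the identity \( d\psi_+=\Tr(AQ)\,\sigma_0^2 \) more explicit than the paper does, but the argument is the same.
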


\begin{proof}
  Choose a conformal basis \( e_4,\dots,e_7 \) of \( T^*_xM \) so that
  \( h\sigma_i \) are as in the proof of
  Proposition~\ref{prop:reduction} with \( c^2 = q_{11} \), \( bc =
  -q_{12} \) and \( a^2 = q_{22} - b^2 \).  This is consistent with
  the equation \( ac = h \).  Now inspired by the proof of
  Proposition~\ref{prop:G2T2} we write \( \theta_1 = ae_1 + be_2 \)
  and \( \theta_2 = ce_2 \).  The basis \( e_1,e_2,e_7,e_4,e_6,e_5 \)
  is then an \( \SU(3) \)-basis for \( T^*\X \), with defining forms
  given via~\eqref{eq:SU3-sub} for \( g_{UU} = q_{11}/h^2 \), \(
  g_{UV} = q_{12}/h^2 \) and \( g_{VV} = q_{22}/h^2 \).

  For the final assertion we need to study the
  equations~\eqref{eq:half-flat}.  Firstly, \( \sigma\wedge d\sigma =
  \sigma_0\wedge d\theta_1 \wedge \theta_2 + \sigma_0\wedge d\theta_2
  \wedge \theta_1 \), which vanishes only if \( d\Theta^+ \) is
  orthogonal to \( \sigma_0 \).  This implies that \( d\Theta^+ \) is
  a linear combination \( (\sigma_1,\sigma_2)A \) of \( \sigma_1 \)
  and \( \sigma_2 \).  Now \( d\psi_+ = \sigma_1 \wedge d\theta_1 +
  \sigma_2 \wedge d\theta_2 \), and the vanishing of \( d\psi_+ \)
  gives the constraint \( \Tr(AQ) = 0 \).
\end{proof}

\begin{remark}
  The \( \SU(3) \)-structures found here are more general than those
  studied in~\cite{Goldstein-P:SU3} since the connection one-forms are
  not orthonormal.
\end{remark}

\begin{example}
  Consider \( Y = \bR^7 = \bR \oplus \bC^3 \) endowed with the usual
  three-form and the action of the standard diagonal maximal torus \(
  T^2\subset\SU(3) \).  Concretely, \(\phi\) is given by
  \begin{equation*}
    \phi = \tfrac i2 dx\wedge(dz_1\wedge d\bar z_1+dz_2\wedge
    d\bar z_2+dz_3\wedge d\bar z_3)+\re (dz_1\wedge dz_2\wedge
    dz_3), 
  \end{equation*}
  and \(T^2\) acts by \( (e^{i\theta},e^{i\varphi}) \cdot (x, z_1,
  z_2, z_3) = (x, e^{i\theta}z_1, e^{i\varphi}z_2,
  e^{-i(\theta+\varphi)}z_3) \).  The action is generated by the
  vector fields \( U= \re\{ i(z_1\frac{\partial}{\partial
    z_1}-z_3\frac{\partial}{\partial z_3})\} \) and \( V=\re\{
  i(z_2\frac{\partial}{\partial z_2}-z_3\frac{\partial}{\partial
    z_3})\} \).  It follows that the multi-moment map \(\nu\colon
  Y\to\bR\) is given by
  \begin{equation*}
    \nu(x,z_1,z_2,z_3)=-\tfrac14 \re(z_1z_2z_3).
  \end{equation*} 

  By definition, the \( T^2 \)-reduction of \( Y \) at level \( t \)
  is the quotient space \( M_t=\nu^{-1}(t)/{T^2} \).  In this case
  \(M_0\) is singular, whereas \(M_t\) is a smooth manifold for each
  \( t\ne0 \).  Indeed considering \(\Phi_t \colon M_t \to \bR^4\)
  given by
  \begin{equation*}
    \begin{split}
      \Phi_t(x,z_1,z_2,z_3) &= \bigl(x,
      \tfrac12(\norm{z_1}^2-\norm{z_3}^2),
      \tfrac1{2}(\norm{z_2}^2-\norm{z_3}^2), \im(z_1z_2z_3)\bigr)\\
      &\eqqcolon (x,u,v,w)
    \end{split}
  \end{equation*}
  we have global smooth coordinates on \( M_t \) for \( t\ne0 \).

  In this smooth case, writing \( 4\eta_u = h^2(g_{VV} du - g_{UV} dv)
  \) and \( 4\eta_v = h^2 (g_{UU}dv-g_{UV} du) \), the two-forms
  \(\sigma_0,\,\sigma_1,\,\sigma_2\) are given by
  \begin{gather*}
    4\sigma_0 = dx\wedge dw + dv\wedge du,\quad
    2\sigma_1 = dx\wedge du + dw\wedge \eta_v,\\
    2\sigma_2 = dx\wedge dv + \eta_u \wedge dw .
  \end{gather*}
  These forms depend (implicitly) on \( t \) via the relations \(
  4g_{UU} = \norm{z_1}^2 + \norm{z_3}^2 \), \( 4g_{VV} = \norm{z_2}^2
  + \norm{z_3}^2 \), \( 4g_{UV} = \norm{z_3}^2 \) and \( z_1z_2z_3 =
  -4t + iw \).  In particular, \( g_{UV} \) is a non-constant
  function, so the coherent triple does not specify a hyperKähler a
  structure.  The (oriented) conformal class has representative metric
  \begin{equation*}
    dx^2 + \frac{h^2}{16}dw^2 + 4g_{UU} \eta_u^2 +
    4g_{VV}\eta_v^2 + 4g_{UV} (\eta_u\eta_v + \eta_v\eta_u).
  \end{equation*}
  The curvature of the principal bundle \(\nu^{-1}(t)\to M_t\) is
  given by
  \begin{gather*}
    4d\theta_1 = th^4 dw \wedge ((2g_{VV}-g_{UV})\eta_u +
    (g_{VV}-2g_{UV})\eta_v)\\
    4d\theta_2= th^4 dw \wedge ( (g_{UU}-2g_{UV}) \eta_u +
    (2g_{UU}-g_{UV})\eta_v).
  \end{gather*}

  In the singular case \( t=0 \), the two-torus collapses in two ways:
  to a point along the real axis \( \bR \times \{0\} \subset
  \bR\times\bC^3 \) and to a circle away from \( \bR\times\{0\} \)
  along \( z_1 = z_2 = 0\), \( z_1 = z_3 = 0 \) or \( z_2 = z_3 = 0
  \). The collapsing happens when \( w=0 \) and \( u,v \) satisfy one
  of the following three constraints: \( (u = v \leqslant 0) \), \(
  (u=0,\ v\geqslant 0) \) or \( (u \geqslant 0,\ v=0) \).
\end{example}
 
Studying a certain Hamiltonian flow, Hitchin~\cite{Hitchin:forms}
developed a relationship between torsion-free \( G_2 \)-metrics and
half-flat \( \SU(3) \)-manifolds.  In particular, he derived evolution
equations that describe the one-dimensional flow of a half-flat \(
\SU(3) \)-manifold along its unit normal in a torsion-free \( G_2
\)-manifold.  When the flow equations have a solution, this determines
a torsion-free \( G_2 \)-metric from a half-flat \( \SU(3)
\)-manifold.  In inverting our construction, one could use Hitchin's
flow on the half-flat structure of Proposition~\ref{prop:half-flat}.
However, Hitchin's flow does not preserve the level sets of the
multi-moment map: the unit normal is \( h(d\nu)^\sharp \), but \(
\partial/\partial\nu = h^2(d\nu)^\sharp \).  It is thus more natural
for us to determine the flow equations associated to the latter vector
field.

\begin{proposition}
  Suppose \( T^2 \) acts freely on a connected seven-manifold \( Y \)
  preserving a torsion-free \( G_2 \)-structure \( \phi \) and
  admitting a multi-moment map \( \nu \).  Let \( M \) be the
  topological reduction \( \nu^{-1}(t)/T^2 \) for any \( t \) in the
  image of \( \nu \).  Then \( M \) is equipped with a \( t
  \)-dependent coherent symplectic triple \(
  \sigma_0,\sigma_1,\sigma_2 \) and \( \Xt = \nu^{-1}(t) \) carries
  the half-flat \( \SU(3) \)-structure \( (\sigma,\psi_\pm) \) of
  Proposition~\ref{prop:half-flat}.  The forms on \( \Xt \) satisfy
  the following system of differential equations:
  \begin{equation}
    \label{eq:evol6}
    \begin{gathered}
      \psi_+'=d(h\sigma)\\
      (\tfrac12\sigma^2)'=-d(h\psi_-),
    \end{gathered}
  \end{equation}
  where \('\) denotes differentiation with respect to \( t \).

  Moreover, given a half-flat \( \SU(3) \)-structure on a six-manifold
  \( \Xt[0] \), the system \eqref{eq:evol6} has at most one solution
  and that solution determines a torsion-free \( G_2 \)-structure on
  \( \Xt[0] \times (-\varepsilon,\varepsilon) \) for some \(
  \varepsilon > 0 \).
\end{proposition}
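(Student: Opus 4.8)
The plan is to dispatch the structural claims from results already proved, to obtain the evolution equations~\eqref{eq:evol6} by decomposing the closed forms $\phi$ and $\Hodge\phi$ along the flow of $\partial/\partial\nu$, and then to reverse this computation for the converse. For the first two sentences of the proposition there is nothing new: for each regular value the hypersurface $\Xt=\nu^{-1}(t)$ and its quotient $M=\Xt/T^2$ are precisely the objects of Propositions~\ref{prop:reduction} and~\ref{prop:half-flat}, and letting $t$ range over the image of $\nu$ turns $(\sigma_0,\sigma_1,\sigma_2)$ and $(\sigma,\psi_\pm)$ into the asserted $t$-dependent families, already known to be coherent and half-flat.

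To derive~\eqref{eq:evol6} I would flow by $\partial/\partial\nu = h^2(d\nu)^\sharp = hN$, which locally identifies $Y_0$ with $\Xt\times I$ where $t=\nu$; then $dt=d\nu$ and the unit conormal is $N^\flat = h\,dt$. Substituting into~\eqref{eq:SU3-sub} gives the decompositions
\begin{equation*}
  \phi = \psi_+ + dt\wedge(h\sigma),\qquad
  \Hodge\phi = \tfrac12\sigma^2 - dt\wedge(h\psi_-).
\end{equation*}
On $\Xt\times I$ one has $d = d_{\Xt} + dt\wedge\partial_t$, so separating $d\phi=0$ and $d\Hodge\phi=0$ into their horizontal and $dt$-parts yields, on the one hand, the half-flat relations $d\psi_+=0$ and $\sigma\wedge d\sigma=0$ of~\eqref{eq:half-flat}, and on the other the two equations of~\eqref{eq:evol6}. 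This is a short computation once the decomposition is in place, and it is the conceptual heart of the forward direction.

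For the converse I would run this backwards. Given a solution $(\sigma,\psi_\pm)$ of~\eqref{eq:evol6} on $\Xt[0]\times(-\varepsilon,\varepsilon)$ I set $\phi = \psi_+ + dt\wedge(h\sigma)$ and observe that closedness and co-closedness of $\phi$ are equivalent to~\eqref{eq:evol6} together with the half-flat conditions. The latter propagate automatically: $\partial_t(d\psi_+)=d(\psi_+')=d(d(h\sigma))=0$ and $\partial_t(d(\tfrac12\sigma^2))=d((\tfrac12\sigma^2)')=-d(d(h\psi_-))=0$, so they persist once imposed at $t=0$. Uniqueness follows because~\eqref{eq:evol6} prescribes $\partial_t\psi_+$ and $\partial_t(\tfrac12\sigma^2)$, and an $\SU(3)$-structure is recovered from the stable pair $(\psi_+,\tfrac12\sigma^2)$ by~\cite{Hitchin:forms}; hence the right-hand sides are functions of the current structure and the evolution is fixed by its initial value. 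For short-time existence I would pass to the real-analytic category, apply the Cauchy--Kovalevskaya theorem, and then check that the resulting $\phi$ is pointwise equivalent to $\phi_0$ and closed and co-closed, so defines a torsion-free $G_2$-structure.

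The hard part is the converse. The system~\eqref{eq:evol6} is genuinely different from Hitchin's flow: because $\partial/\partial\nu=hN$ is not the unit normal, the spatially varying factor $h$ produces a term $dh\wedge\sigma$ absent from the standard evolution, and one must control how $h$ is pinned down by the evolving $\SU(3)$-structure and ensure it stays positive. More seriously, one must verify that the evolving forms remain a bona fide $\SU(3)$-structure, i.e.\ that the compatibility and normalisation conditions of the $\SU(3)$ reduction are preserved along~\eqref{eq:evol6}; this, together with the real-analyticity needed for Cauchy--Kovalevskaya, is where the technical weight of the argument lies.
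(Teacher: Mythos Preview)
Your forward derivation of~\eqref{eq:evol6} is essentially the paper's: write $\phi=\psi_++h\,d\nu\wedge\sigma$ and $\Hodge\phi=\tfrac12\sigma^2+h\,d\nu\wedge\psi_-$, split $d=d_{\Xt}+d\nu\wedge\partial_t$, and read off the half-flat conditions and the evolution equations from $d\phi=0=d\Hodge\phi$.

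For uniqueness you take a genuinely different route. You argue abstractly via Hitchin's stable-form theory that $(\psi_+,\tfrac12\sigma^2)$ determines the $\SU(3)$-structure, so the right-hand sides of~\eqref{eq:evol6} are functions of the current state. The paper instead descends to $M$ and writes out an explicit first-order system for the four-dimensional data: $\sigma_0'=0$, $\sigma_1'=-d\theta_2$, $\sigma_2'=d\theta_1$, together with equations for $h'$, $q_{ij}'$ and $\theta_i'$ in terms of the curvature $d\Theta$ and the gradients $dq_{ij}$. Your approach is cleaner in principle, but you yourself flag its weak point: the right-hand sides involve $h$, which is not part of an abstract $\SU(3)$-structure; you would still need the $T^2$-action to extract $\sigma_i$, then $Q$, then $h=\sqrt{\det Q}$, before the Hitchin argument closes up. The paper's concrete system sidesteps this and, more importantly, is itself the deliverable: those equations are quoted verbatim in the subsequent theorem and are what one actually integrates in the hyperK\"ahler example. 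Your propagation-of-half-flatness and Cauchy--Kovalevskaya remarks are correct but are handled by the paper in a remark after the proof rather than inside it.
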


\begin{proof}
  We have
  \begin{equation*}
    \phi=\sigma\wedge hd\nu+\psi_+
    \quad\text{and}\quad \Hodge\phi=\psi_-\wedge hd\nu+\tfrac12\sigma^2.
  \end{equation*}
  These have derivatives
  \begin{gather*}
    d\phi=(hd\sigma+dh\wedge\sigma)\wedge d\nu+d\psi_+,\\
    d\Hodge\phi=(hd\psi_-+dh\wedge\psi_-)\wedge d\nu+\sigma\wedge
    d\sigma
  \end{gather*}
  Half-flatness of \( (\sigma,\psi_\pm) \) gives \( d\phi = 0 =
  d\Hodge\phi \) if and only if
  \begin{equation*}
    0 = \frac{\partial}{\partial\nu} \hook d\phi = -d(h\sigma) +
    \psi'_+ \quad\text{and}\quad 0 = \frac{\partial}{\partial\nu}
    \hook d\Hodge\phi = d(h\psi_-) + \sigma\wedge\sigma'. 
  \end{equation*}
  Hence we have a torsion-free \( G_2 \)-structure if and only if the
  evolution equations \eqref{eq:evol6} are satisfied.

  To demonstrate uniqueness of the solutions we rewrite the evolution
  equations as a complete set of first order differential equations
  for the data on \( M \).  Firstly, the derivatives of \(\sigma_0\),
  \(\sigma_1\), \(\sigma_2\) and \( h \) with respect to
  \({\partial}/{\partial\nu}\) are:
  \begin{equation}
    \begin{gathered}
      \label{eq:flowderM}
      \sigma_0'=0,\quad \sigma_1'=-d\theta_2,\quad \sigma_2'=d\theta_1,\\
      hh'\sigma^2_0=(q_{11}\sigma_2-q_{12}\sigma_1)\wedge
      d\theta_1+(q_{12}\sigma_2-q_{22}\sigma_1)\wedge d\theta_2.
    \end{gathered}
  \end{equation}
  Using \eqref{eq:flowderM} and the definition of \( Q \), we obtain
  the following equations:
  \begin{equation}
    \label{eq:flowderq}
    q_{11}'\sigma_0^2 = -2\sigma_1\wedge d\theta_2,\quad
    q_{22}'\sigma_0^2 = 2\sigma_2\wedge d\theta_1,\quad 
    q_{12}'\sigma_0^2 = \sigma_1\wedge d\theta_1-\sigma_2\wedge
    d\theta_2.
  \end{equation} 
  Finally, combining \eqref{eq:SU3hfl} and \eqref{eq:evol6}, we obtain
  a relation for the derivatives of the connection one-form
  \((\theta_1,\theta_2)\):
  \begin{equation}
    \label{eq:flowdercon}
    \sigma_0\wedge\theta'_1 = dq_{12}\wedge\sigma_2-dq_{22}\wedge\sigma_1,\quad
    \sigma_0\wedge\theta'_2 = dq_{11}\wedge\sigma_2-dq_{12}\wedge\sigma_1.
  \end{equation}
\end{proof}

\begin{remark}
  Modifying the arguments in the proof of \cite[Theorem
  2.3]{Cortes-LSSH:half-flat}, one may verify that the evolution
  equations \eqref{eq:evol6} together with an initial half-flat \(
  \SU(3) \)-structure on \( \Xt[0] \) already ensure that the family
  consists of half-flat structures.  If the initial data are analytic,
  we can solve the flow equations and thereby obtain a holonomy \( G_2
  \)-metric with \( T^2 \)-symmetry.  Indeed, if \( g_M \) is the
  time-dependent metric in the conformal class on \( M \) with volume
  form \( \tfrac12 h^2\sigma_0^2 \), then the \( G_2 \)-metric is
  explicitly
  \begin{equation*}
    h^2dt^2 + g_M + h^{-2}(q_{11} \theta_1^2 + q_{22}\theta_2^2 +
    q_{12}(\theta_1\theta_2+\theta_2\theta_1)). 
  \end{equation*}
  Note that Bryant's study of the Hitchin flow \cite{Bryant:nonemb}
  shows that non-analytic initial data can lead to an ill-posed system
  that has no solution.
\end{remark}

Summarising the results of this section we have:

\begin{theorem}
  Let \( (Y^7,\phi) \) be a torsion-free \( G_2 \)-structure with a
  free \( T^2 \)\bdash symmetry and admitting a multi-moment map.
  Then the reduction \( M \) at a level \( t \) is a coherently
  tri-symplectic four-manifold and the level set \( \Xt \) is a \( T^2
  \)-bundle over \( M \) satisfying the orthogonality condition on \(
  F_+ = d\Theta^+ \) of Proposition~\ref{prop:half-flat}.

  Conversely a coherently tri-symplectic four-manifold together with
  an orthogonal \( F_+ \in \Omega^2(M,\bR^2) \) with integral periods
  define a torsion-free \( G_2 \)-metric with \( T^2 \)-symmetry
  provided the flow equations \eqref{eq:flowderM},
  \eqref{eq:flowderq}, \eqref{eq:flowdercon} admit a solution.  \qed
\end{theorem}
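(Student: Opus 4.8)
The plan is to assemble the statement from the propositions already established in this section; both directions are essentially bookkeeping, since the genuine analytic and linear-algebraic content sits in Propositions~\ref{prop:reduction} and~\ref{prop:half-flat} and in the evolution proposition governing~\eqref{eq:evol6}.

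For the forward direction I would start from $(Y^7,\phi)$ torsion-free with free $T^2$-symmetry and multi-moment map $\nu\colon Y\to\Pg[\bR^2]^*\cong\bR$, and fix $t$ in the image of $\nu$. Proposition~\ref{prop:reduction} produces three pointwise linearly independent, equally oriented symplectic forms $\sigma_0,\sigma_1,\sigma_2$ on $M=\nu^{-1}(t)/T^2$, and the relations~\eqref{eq:G2conrel} recorded there give $\sigma_0\wedge\sigma_i=0$ for $i=1,2$ together with positive definiteness of the intersection matrix $\tilde Q$; hence $\Span{\sigma_0,\sigma_1,\sigma_2}$ is a maximal positive subbundle of $\Lambda^2T^*M$ and $(M,\CST)$ is coherently tri-symplectic. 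The free action makes $\Xt=\nu^{-1}(t)\to M$ a principal $T^2$-bundle with connection one-form $\Theta=(\theta_1,\theta_2)$, as noted after~\eqref{eq:G2conrel}. Since oriented hypersurfaces in torsion-free $G_2$-manifolds are half-flat~\cite{Chiossi-S:SU3-G2}, the induced $\SU(3)$-structure~\eqref{eq:SU3-sub} on $\Xt$ satisfies~\eqref{eq:half-flat}, and the final assertion of Proposition~\ref{prop:half-flat} then forces $F_+=d\Theta^+=(\sigma_1,\sigma_2)A$ with $\Tr(AQ)=0$, which is exactly the required orthogonality condition. Being the self-dual part of the curvature of an honest principal bundle, $F_+$ carries integral periods automatically.

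For the converse, let $(M,\CST)$ be coherently tri-symplectic and let $F_+\in\Omega^2(M,\bR^2)$ be closed, orthogonal and with integral periods. Since $F_+=(\sigma_1,\sigma_2)A$ lies in $\Span{\sigma_1,\sigma_2}\otimes\bR^2\subset\Lambda^+\otimes\bR^2$, it is self-dual. The integral-period hypothesis realises its de Rham class as the curvature class of a principal $T^2$-bundle $\X\to M$, and adjusting any initial connection by a pull-back one-form I would arrange a connection $\Theta$ with $d\Theta=F_+$ exactly; self-duality then gives $d\Theta^+=F_+$ with no anti-self-dual contribution, so the orthogonality condition $\Tr(AQ)=0$ holds by hypothesis. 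Proposition~\ref{prop:half-flat} yields the half-flat $\SU(3)$-structure~\eqref{eq:SU3hfl} on $\X$, and feeding this into the evolution proposition — with its reformulation~\eqref{eq:flowderM}, \eqref{eq:flowderq}, \eqref{eq:flowdercon}, and under the standing assumption that the flow admits a solution — produces a torsion-free $G_2$-structure on a collar $\X\times(-\varepsilon,\varepsilon)$ carrying the prescribed $T^2$-symmetry.

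The genuine work having already been discharged, the main obstacle in the assembly is the bundle-theoretic step in the converse: one must pass from ``integral periods'' to an actual connection whose self-dual curvature is precisely $F_+$, not merely cohomologous to it. This is where closedness of $F_+$ (so that the periods are well defined) and its self-duality (so that prescribing $d\Theta=F_+$ leaves $d\Theta^+=F_+$ with no spurious anti-self-dual part) are both essential. A secondary point to verify is that the two constructions are mutually inverse at fixed $t$: the coherent triple produced in the forward direction is $t$-dependent, and one should confirm that reconstructing $\X$ and solving the flow reproduces the original $G_2$-data, which follows from the explicit metric formula recorded in the preceding remark together with the uniqueness clause of the evolution proposition.
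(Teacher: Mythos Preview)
Your proposal is correct and matches the paper's approach: the theorem is presented there explicitly as a summary of the section (``Summarising the results of this section we have:'') and is marked with a bare \(\qed\), so the intended proof is precisely the assembly you describe from Propositions~\ref{prop:reduction}, \ref{prop:half-flat}, and the evolution proposition. Your additional care about the bundle-theoretic step in the converse (closedness and self-duality of \(F_+\), adjusting the connection to hit \(F_+\) on the nose) spells out details the paper leaves implicit.
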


\begin{example}
  Let \( M \) be a hyperK{\"a}hler four-manifold. Then \( M \) comes
  equipped with three symplectic forms \( \sigma_0 \), \( \sigma_1 \),
  \( \sigma_2 \) that satisfy the relations \( \sigma_i\wedge\sigma_j
  = \delta_{ij}\sigma_0^2 \). In particular, \(
  (\sigma_0,\sigma_1,\sigma_2) \) forms a coherent symplectic triple,
  and \( Q \) is the identity matrix: \( h^2 = q^2_{11} = q^2_{22}=1\)
  and \( q_{12}=0 \). If the two-forms \( \sigma_1 \), \( \sigma_2 \)
  have integral periods, we may construct a \( T^2 \)-bundle over \( M
  \) with connection one-form \( \Theta \) that satisfies \( d\Theta =
  (\sigma_1,\sigma_2) \left(\begin{smallmatrix}\alpha & a \\ b &
      -\alpha\end{smallmatrix}\right) \) for integers \(
  \alpha,a,b\in\mathbb{Z} \). The total space \( \mathcal X_0 \) of
  this bundle carries a half-flat \( \SU(3) \)-structure given by
  \eqref{eq:SU3hfl}, and the associated metric is complete if the
  hyperK{\"a}hler base manifold is complete.

  We shall now illustrate how one may solve the flow equations,
  starting from the above data at initial time \( t=0 \). As an a
  priori simplifying assumption, we consider the case when \(
  (d\Theta)'=0 \), i.e., the principal curvatures are \( t
  \)-independent. Then the differential equations for the symplectic
  triple simplify considerably:
  \begin{equation*}
    \sigma_0' = 0,\quad \sigma_1' = -a\Omega_1+\alpha\Omega_2,\quad \sigma_2' = \alpha\Omega_1+b\Omega_2,
  \end{equation*}
  where \(\Omega_1=\sigma_1(0) \),
  \(\Omega_2=\sigma_2(0)\). Integrating these equations, we find that
  \begin{equation*}
    \sigma_0(t) = \sigma_0,\quad \sigma_1(t) = (1-at)\Omega_1+\alpha t\Omega_2,\quad \sigma_2(t) = \alpha t\Omega_1+(1+bt)\Omega_2.
  \end{equation*}
  Using this observation, we may rewrite the equations for \( q'_{ij}
  \) as follows:
  \begin{equation*}
    q'_{11}=2(\alpha^2+a^2)t-2a,\quad q'_{22}=2(\alpha^2+b^2)t+2b, \quad q'_{12}=2\alpha((b-a)t+1),
  \end{equation*}
  and from this we see that \( Q(t)=(1+tA)^2 \), where \( A =
  \left(\begin{smallmatrix} \alpha & a \\ b &
      -\alpha \end{smallmatrix}\right)\left( \begin{smallmatrix}0 & 1
      \\ -1 & 0 \end{smallmatrix}\right) \). As a consequence we have
  that \( dq_{ij}(t)=0 \). Hence, from \eqref{eq:flowdercon}, \(
  \Theta'=0 \) so that \( \Theta(t)=\Theta \). Moreover, one may check
  that the function \( h(t) = \det(A)t^2+\Tr(A)t+1 \) evolves in
  accordance with the equation \(
  hh'\sigma_0^2=(q_{11}\sigma_2-q_{12}\sigma_1)\wedge
  d\theta_1+(q_{12}\sigma_2-q_{22}\sigma_1)\wedge d\theta_2 \).

  The above solution is defined on \( \mathcal X_0 \times I \), where
  the interval \( I\subset\bR \) is determined by non-degeneracy of
  the matrix \( 1+tA \) and \( 0\in I \). By uniqueness of the
  solution on \( \mathcal X_0\times I \), we deduce that the property
  \( (d\Theta)'=0 \) is already implied by the initial data, i.e., it
  is not a simplifying assumption.

  The associated torsion-free \( G_2 \)-structure is determined by the
  three-form
  \begin{equation*}
    \phi=h(t)^2\sigma_0\wedge dt+\theta_1\wedge\theta_2\wedge dt + \sigma_1(t)\wedge\theta_1+\sigma_2(t)\wedge\theta_2,
  \end{equation*}
  and the corresponding holonomy \( G_2 \)-metric is given by
  \begin{equation*}
    g = h(t)^2dt^2+h(t)g_0+h(t)^{-2}(q_{11}(t)\theta_1^2+q_{22}(t)\theta_2^2+q_{12}(t)(\theta_1\theta_2+\theta_2\theta_1)),
  \end{equation*}
  where \( g_0 \) is the initial hyperK{\"a}hler metric on \( M \).

  If the initial hyperK{\"a}hler four-manifold is complete, then we may
  describe completeness properties of \( g \) in terms of the matrix
  \( A \). Provided \( g \) remains finite and non-degenerate, completeness corresponds to completeness of \( h(t)^2dt^2 \) on \( I \), cf. \cite{Bishop-O:warped}. We find that the metric is half-complete,
  cf. \cite{Apostolov-S:K-G2}, precisely when \( \det A\geq 0 \);
  completeness is obtained only for \( A =0 \).
\end{example}

\providecommand{\bysame}{\leavevmode\hbox to3em{\hrulefill}\thinspace}
\providecommand{\MR}{\relax\ifhmode\unskip\space\fi MR }
\providecommand{\MRhref}[2]{%
  \href{http://www.ams.org/mathscinet-getitem?mr=#1}{#2} }
\providecommand{\href}[2]{#2}

\begin{small}
  \parindent0pt\parskip\baselineskip

  T.B.Madsen\textsuperscript{a} \& A.F.Swann\textsuperscript{a,b}

  \textsuperscript{a}Department of Mathematics and Computer Science, University of
  Southern Denmark, Campusvej 55, DK-5230 Odense M, Denmark

  \textit{and}

  CP\textsuperscript3-Origins, Centre of Excellence for Particle
  Physics Phenomenology, University of Southern Denmark, Campusvej 55,
  DK-5230 Odense M, Denmark.

  \textsuperscript{b}Department of Mathematical Sciences, University of
    Aarhus, Ny Munkegade 118, Bldg 1530, DK-8000 Aarhus C, Denmark. 

  \textit{E-mail}: \url{tbmadsen@imada.sdu.dk},
  \url{swann@imada.sdu.dk}, \url{swann@imf.au.dk}
\end{small}

\end{document}